\numberwithin{equation}{section}
\newtheorem{theorem}{Theorem}[section]
\newtheorem{definition}[theorem]{Definition}
\begin{document}
\baselineskip=16pt

\title{ Hexagonal and \texorpdfstring{$k$}{}-hexagonal graph's normalized Laplacian spectrum and applications \footnote{Corresponding author: Hao Liu (lhmath@njtech.edu.cn)}
}

\author{ Hao Li$^{1,2}$, Xinyi Chen$^{1}$ and Hao Liu$^{1,*}$ \\
	%	\small $^{1}$  63763 army of PLA,\\
	%	\small  Lingshui, China, 572400.\\
    \small $^{1}$ School of Physical and Mathematical Sciences, Nanjing Tech University,\\
	\small  Nanjing, China, 211816.\\
	\small $^{2}$ College of Science, National University of Defense Technology, \\
	\small  Changsha, China, 410073.\\
	%\small $^{2}$  Sanya Station, China Xi’an Statellite Control Center,\\
	%\small  Sanya, China, 572427.\\
}

%\author{Yingui Pan\thanks{Corresponding author: Yingui Pan(panygui@163.com)},   Jianping Li   \\
%	\small  College of Liberal Arts and Sciences, National University of Defense Technology, \\
%	\small  Changsha, China, 410073.\\
%\small $^{b}$  School of Mathematics and Computational Science, Hunan First Normal University,\\
%\small  Changsha, China, 410205.\\
%\small $^{c}$  School of Mathematics and Statistics, Shandong Normal University,\\
%\small Jinan, Shandong, China,  250014.

\date{\today}

\maketitle

\begin{abstract}
 Substituting each edge of a simple connected graph $G$ by a path of length 1 and $k$ paths of length 5 generates the $k$-hexagonal graph $H^k(G)$.
        Iterative graph $H^k_n(G)$ is produced when the preceding  constructions are repeated $n$ times. According to the graph structure, we obtain a set of linear equations, and derive the entirely normalized  Laplacian spectrum of $H^k_n(G)$  when $k = 1$ and $k \geqslant 2$ respectively by analyzing the structure of the solutions of these linear equations. We find significant formulas to calculate   the Kemeny's constant, multiplicative degree-Kirchhoff index and number of spanning trees of $H^k_n(G)$ as applications.
\end{abstract}

\textbf{AMS Classification:}  05C50; 90C57

\textbf{Keywords:} \texorpdfstring{$k$}{}-hexagonal graph; Kemeny's constant; Multiplicative degree-Kirchhoff index;  Spanning tree; Normalized Laplacian spectrum.

\section{Introduction}
Spectral graph theory studies the properties of the graph by analyzing the eigenvalues and eigenvectors of some matrices such as adjacency, Laplacian and normalized Laplacian matrices, which can be calculated  according to the construction of the graph.
%Among all the matrices of the graph, the normalized Laplacian matrix are closely related to the random walk of the graph, and it has been widely studied.
%The $resistance\ distance$  $r_{ij}$ between two vertices $i$ and $j$ of a connected graph $G$ is equivalent to the resistance between the respective two points of an electrical network created to match to $G$ such that the resistance between any two adjacent points is unity \cite{resistance}. Similar to the standard distance, the resistance distance is intrinsic to the graph, not only with some fine purely mathematical properties, but also with a substantial potential for chemical applications \cite{Resistance_distance},
%Resistance distance has gradually become a hot research topicSince Klein and Randic \cite{Resistance_distance} proposed the concept of resistance distance in 1993. As an important topological index, resistance distance has a wide range of applications in random walks on graphs, harmonic function theory, network robustness analysis and chemical graph theory. 

In mathematical chemistry, the chemical structure can be represented by graphs in which vertices represent atoms and edges represent chemical bonds. This performance inherited a wealth of essential information regarding the chemical characteristics of molecules. Many physical and chemical properties of molecules are strongly related to the parameters of graph.
Multiplicative degree-Kirchhoff index  \cite{1} is one of these parameters. It is described by
$
    Kf^{'}\left( G \right) =\sum_{v<u}{d_vd_ur_{vu}},
$
in which $r_{vu}$  is the resistance distance  \cite{Resistance_distance} between vertex $v$ and vertex $u$.
$Kf^{'}\left( G \right)$ can be calculated by the normalized Laplacian spectrum of $G$  \cite{1}.
%Additional information regarding the normalized Laplacian and the multiplicative degree-Kirchhoff index can be found in . 
The normalized Laplacian matrix can also be used to calculate some other parameters of the graph, including  the number of spanning trees and Kemeny's constant.

%The $Kemeny's\ constant\ K(G)$ of $G$ is the number of steps needed to transit from the initial vertex to the expected target vertex, and the route is randomly selected according to the static distribution of unbiased random walks on $G$  \cite{kemeny}.
%Yang and Zhang \cite{h30} investigated the combinatorial Laplacian of the linear hexagonal chain, whereas
In this paper, we  consider a simple and connected graph $G$. Let $D(G)$ be the diagonal matrix of vertex degree of $G$ and $A(G)$ be the adjacency matrix of $G$. The Laplacian matrix of $G$ is $L(G)=D(G)-A(G)$. Let $
    P\left( G \right) =D\left( G \right) ^{-\frac{1}{2}}A\left( G \right) D\left( G \right) ^{-\frac{1}{2}}$,
then the $(i,j)$-entry of $P(G)$ is $\frac{\left( A\left( G \right) \right) _{ij}}{\sqrt{d_id_j}}$, where $d_i$ is the degree of vertex $i$. The normalized Laplacian matrix of $G$ is defined as
\begin{equation}\label{eq2}
    \mathcal{L} \left( G \right)
    =D\left( G \right) ^{-\frac{1}{2}}L\left( G \right) D\left( G \right) ^{-\frac{1}{2}}
    =I-P\left( G \right),
\end{equation}
in which $I$ is the unity matrix.

Eq.(\ref{eq2}) represents the bijective relationship between the spectra of $P\left( G \right)$ and $\mathcal{L} \left( G \right)$.
Let $X=(x_1,x_2,\cdots,x_N)^T$ represent an eigenvector corresponding to the eigenvalue $\lambda$ of $\mathcal{L} \left( G \right)$, where $N$ is the order of $G$. By Eq.(\ref{eq2}), we have $P\left( G \right)X=(1-\lambda)X$, which can be expressed by
\begin{equation}\label{eq3}
    \sum_{j=1}^N{\frac{\left( A\left( G \right) \right) _{ij}}{\sqrt{d_id_j}} x_j}=\left( 1-\lambda \right) x_i\,\,,\ i=1,2,\cdots N.
\end{equation}

Note that $\mathcal{L} \left( G \right)$ is hermitian  and similar to $D(G)^{-1}L(G)$, its eigenvalues are non-negative \cite{qua}. The normalized Laplacian spectrum of $G$ is defined by $S(G) =\left\{ \lambda _1,\lambda _2,\cdots ,\lambda _N \right\}$, in which $\lambda_i$, $i=1,2,\cdots,N$ are eigenvalues of $\mathcal{L} \left( G \right)$ such that $0=\lambda _1<\lambda _2\leqslant \cdots \leqslant \lambda _N \leqslant 2$, and when $G$ is bipartite, $\lambda _N = 2$  \cite{Spectralgraphtheory}.

According to  \cite{Hex_and_CHem}, benzenoid hydrocarbons are naturally represented by hexagonal systems. Therefore, hexagonal systems serve an essential function in mathematical chemistry. Numerous studies have been conducted on normalized Laplacian of hexagonal systems so far. For example,  Huang  \cite{h31} investigated the normalized Laplacian of the linear hexagonal chain. Li and sun offered the normalized Laplacian of the hexagonal Möbius chain and the zigzag polyhex nanotube   \cite{lisum}.

Given the importance of hexagonal graphs in chemistry  \cite{Hex_and_CHem}, and inspired by Li and Hou  \cite{qua}, Xie  \cite{tri}, and Huang  \cite{k-tri} et al.,  we obtain  the entirely normalized Laplacian spectrum of a special iterative graph of $G$ which turn each edge of $G$ into some hexagons. The significant formulas to calculate  the Kemeny's constants, multiplicative degree-Kirchhoff indices and numbers of spanning trees are also characterized as applications.

\section{The normalized Laplacian spectrum of \texorpdfstring{$H_{n}(G)$}{} and \texorpdfstring{$H^k_{n}(G)$}{}}

%In this section, we consider a simple connected graph $G(V(G),E(G))$ with vertex and edge set $V(G)$ and $E(G)$. 
%Let $N_0$ and $E_0$ denote the order  and the size of $G$.

\begin{definition}
    The $k$-hexagonal graph $H^k(G)$ of $G$, is obtained by substituting each edge of $G$ by a path of length 1 and $k$ paths of length 5. If $k = 1$, $H^1(G)$ is simplified to $H(G)$,  described as the hexagonal graph of $G$.
\end{definition}

Figure \ref{fig} provides an illustration of the hexagonal and $k$-hexagonal graphs.
\begin{figure}[ht]
    \centering
    \includegraphics[scale=0.2]{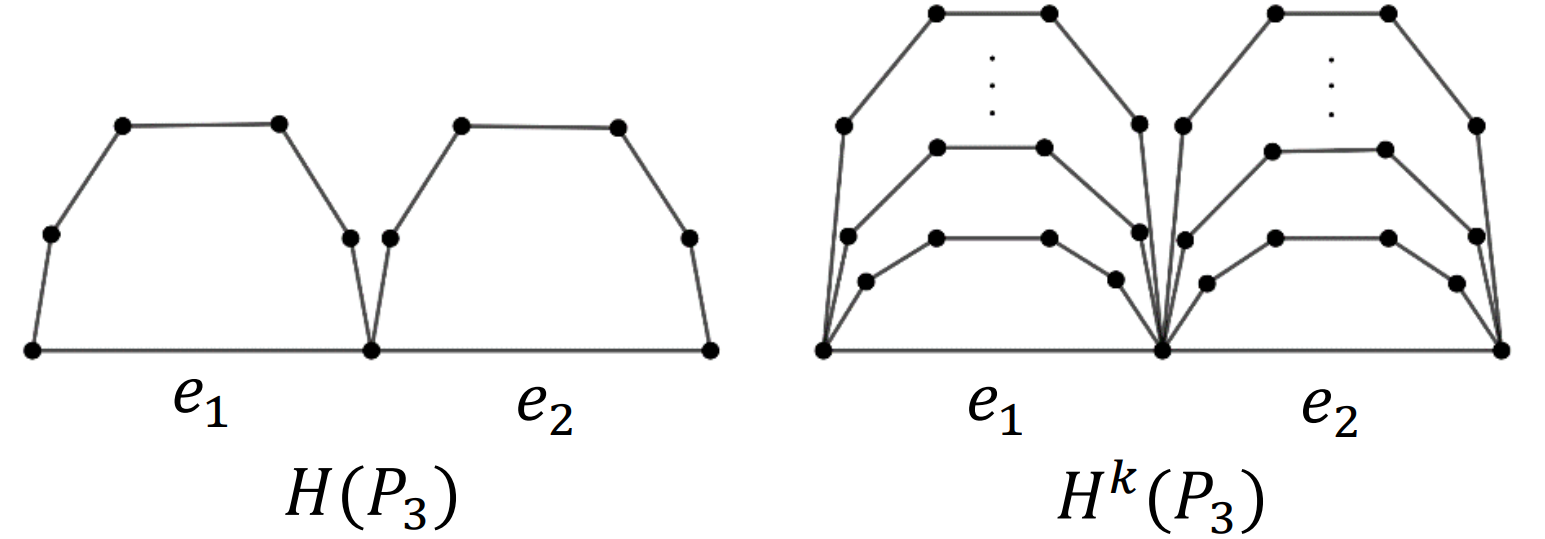}
    \caption{Graphs $H(P_3)$ and $H^k(P_3)$ ($P_3$ denotes a path of order 3).}
    \label{fig}
\end{figure}

Denote $H^k_0(G)=G$ and $H^k_1(G)=H^k(G)$. The $n$-th $k$-hexagonal graph is obtained through $H^k_{n}(G)=H^k(H^k_{n-1}(G))$. Let $N_n^k$ and $E_n^k$  be the order and size of $H^k_{n}(G)$, respectively. In particular, if $k=1$,  $N_n^k$ and $E_n^k$  are simplified to $N_n$ and $E_n$. Obviously, we have
$$
    N_{n}^{k}=N_{n-1}^{k}+4kE_{n-1}^{k},\ E_{n}^{k}=\left( 5k+1 \right) E_{n-1}^{k}
$$
and
$$
    N_{n}^{k}=N_{0}^{k}+\frac{4}{5}\left( \left( 5k+1 \right) ^n-1 \right) E_{0}^{k},\ E_{n}^{k}=\left( 5k+1 \right) ^nE_{0}^{k}.
$$
% In particular,we have
% $$
%     N_n=N_{n-1}+4E_{n-1},\ E_n=6E_{n-1}
% $$
% and
% $$
%     N_n=N_0+\frac{4}{5}\left( 6^n-1 \right) E_0,\ E_n=6^nE_0.
% $$

For convenience, when we refer to $H_n^k(G)$ in the following, we also mean $H_n^k(G),k\geqslant 2$.

As we will see, the characteristic of  $n$-th $k$-hexagonal graph  has a significant effect on the structures of the normalized Laplacian spectrum of $H_{n}(G)$ and $H^k_{n}(G)$. We demonstrate that this spectrum may be derived iteratively by the normalized Laplacian spectrum of  $G$.

\subsection{The normalized Laplacian spectrum of \texorpdfstring{$H_{n}(G)$}{}}
Let $m_{\mathcal{L}_{n}}(\sigma)$  represent the multiplicity of the eigenvalue $\sigma$ of matrix ${\mathcal{L}_{n}}$.
\begin{theorem} \label{theo1}
    %Let $G$ be a simple connected graph with $N_0$ vertices and $E_0$ edges, and $H_{n}(G)$ be the $n$-th hexagonal graph of $G$. 
    Denote by $\mathcal{L}_n $  the normalized Laplacian matrix of $H_{n}(G)$. The normalized Laplacian spectrum of $H_n(G)$, $n \geqslant 1$ is given below.

    \begin{enumerate}[label=(\roman*)]
        \item \label{theo1_1} If $\sigma $ is an eigenvalue of $ \mathcal{L}_{n-1} $  satisfying $\sigma  \ne 0,2$, then $\lambda_1,\lambda_2,\lambda_3$ are the eigenvalues of $ \mathcal{L}_{n} $
              with $m_{\mathcal{L}_{n}}(\lambda_1)=m_{\mathcal{L}_{n}}(\lambda_2)=m_{\mathcal{L}_{n}}(\lambda_3)=m_{\mathcal{L}_{n-1}}(\sigma)$, where $\lambda_1,\lambda_2,\lambda_3$ are the roots of
              \begin{equation}\label{eq18}
                  4\lambda ^3-\left( 10+2\sigma \right) \lambda ^2+\left( 5+4\sigma \right) \lambda -\sigma =0.
              \end{equation}

        \item \label{theo1_2} 0 is an eigenvalue of $\mathcal{L}_{n}$ with the multiplicity 1. While 2 is an eigenvalue of $\mathcal{L}_{n}$ with the multiplicity 1 if $G$ is bipartite;

        \item \label{theo1_3} $ \frac{1}{2}$ and $ \frac{3}{2}$  are the eigenvalues of $\mathcal{L}_n$ with multiplicity $N_{n-1}$.

        \item \label{theo1_4} $ \frac{5-\sqrt{5}}{4}$ and $ \frac{5+\sqrt{5}}{4}$  are the eigenvalues of $\mathcal{L}_n$ with multiplicity $E_{n-1}-N_{n-1}+1$.

        \item \label{theo1_5} When $G$ is bipartite,  $ \frac{3-\sqrt{5}}{4}$, $ \frac{3+\sqrt{5}}{4}$  are the eigenvalues of $\mathcal{L}_n$ with multiplicity $E_{n-1}-N_{n-1}+1$.

        \item \label{theo1_6} When $G$ is non-bipartite,  $ \frac{3-\sqrt{5}}{4}$, $ \frac{3+\sqrt{5}}{4}$  are the eigenvalues of $\mathcal{L}_n$ with multiplicity $E_{n-1}-N_{n-1}$.
    \end{enumerate}
\end{theorem}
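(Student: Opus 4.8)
The plan is to write the eigenvalue equation (\ref{eq3}) for $\mathcal{L}_n$ and exploit the two-level structure of $H_n(G)=H(H_{n-1}(G))$. I split the vertices of $H_n(G)$ into the \emph{old} vertices (those of $H_{n-1}(G)$, whose degree is doubled to $2d_u$) and the \emph{new} vertices (four per edge of $H_{n-1}(G)$, each of degree $2$), writing the hexagon erected on an edge $e=uv$ as the chord $uv$ together with the path $u-a_e-b_e-c_e-d_e-v$. Setting $t=1-\lambda$, $s=2t$ and $p_u=x_u/\sqrt{d_u}$, the four equations at $a_e,b_e,c_e,d_e$ become a tridiagonal system $M(s)(x_{a_e},x_{b_e},x_{c_e},x_{d_e})^{\mathsf T}=(-p_u,0,0,-p_v)^{\mathsf T}$, where $M(s)$ has $-s$ on the diagonal and $1$ off-diagonal, with $\det M(s)=s^4-3s^2+1=:D_4(s)$. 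The equation at an old vertex $u$ becomes $\frac{1}{2\sqrt{d_u}}\sum_{e\ni u}\bigl(p_{v}+x_{w(e,u)}\bigr)=t\,x_u$, where $w(e,u)$ is the new vertex of $e$ adjacent to $u$ and $v$ is the other endpoint of $e$.

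First I treat the generic case $D_4(s)\neq 0$ and $s^2\neq 1,2$. Then $M(s)$ is invertible, so the new coordinates are uniquely determined linear functions of $p_u,p_v$; substituting them into the old-vertex equation shows that the restriction $y=(x_u)_u$ to the old vertices satisfies (\ref{eq3}) for $\mathcal{L}_{n-1}$ with some eigenvalue $\sigma=\sigma(\lambda)$. Clearing denominators turns the relation between $\lambda$ and $\sigma$ into exactly (\ref{eq18}), and the correspondence $\sigma\leftrightarrow\{\lambda_1,\lambda_2,\lambda_3\}$ is a multiplicity-preserving bijection. A direct substitution into (\ref{eq18}) shows $\sigma=0$ forces $\lambda\in\{0,\frac{5\pm\sqrt5}{4}\}$ and $\sigma=2$ forces $\lambda\in\{2,\frac{3\pm\sqrt5}{4}\}$, so exactly $\sigma=0,2$ must be excluded here; this gives (i). For (ii) I use that $0$ is always an eigenvalue of $\mathcal{L}_n$ (eigenvector $D^{1/2}\mathbf{1}$, multiplicity $1$ by connectedness), and that $H_{n-1}(G)$ is bipartite iff $G$ is (each edge is replaced by an even cycle and an odd chord-path), which produces the eigenvalue $2$ with multiplicity $1$ precisely in the bipartite case.

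Next come the degenerate values. When $s^2=1$ (that is $\lambda=\frac12,\frac32$), $M(s)$ is still invertible, so the new coordinates are again forced, but the old-vertex equation now collapses to an identity; hence every $y\in\mathbb{R}^{N_{n-1}}$ extends uniquely to an eigenvector, giving multiplicity $N_{n-1}$ and proving (iii). When $D_4(s)=0$, i.e.\ $s^2=\frac{3\pm\sqrt5}{2}$ and $\lambda\in\{\frac{3\pm\sqrt5}{4},\frac{5\pm\sqrt5}{4}\}$, I will argue that any such eigenvector must vanish on all old vertices, after which the new part on each edge lies in the one-dimensional $\ker M(s)=\langle(1,\,s,\,s^2-1,\,s^3-2s)\rangle$. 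A short computation shows this kernel vector is \emph{symmetric} (first and last entries equal) for $\lambda=\frac{3\pm\sqrt5}{4}$ and \emph{antisymmetric} (first and last entries opposite) for $\lambda=\frac{5\pm\sqrt5}{4}$. Writing the edge coefficients as $(c_e)$, the old-vertex equations then read $\sum_{e\ni u}c_e=0$ in the symmetric case and $\sum_{e\ni u}\pm c_e=0$ (signs fixed by an orientation) in the antisymmetric case, i.e.\ $(c_e)$ lies in the kernel of the \emph{unsigned}, respectively \emph{signed}, incidence matrix of $H_{n-1}(G)$. The signed incidence matrix of a connected graph has rank $N_{n-1}-1$, so the values $\frac{5\pm\sqrt5}{4}$ get multiplicity $E_{n-1}-N_{n-1}+1$, giving (iv); the unsigned incidence matrix has rank $N_{n-1}-1$ when $H_{n-1}(G)$ is bipartite and $N_{n-1}$ otherwise, so the values $\frac{3\pm\sqrt5}{4}$ get multiplicity $E_{n-1}-N_{n-1}+1$ (bipartite) or $E_{n-1}-N_{n-1}$ (non-bipartite), giving (v) and (vi).

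The hard part is the claim, used in the third paragraph, that an eigenvector for one of the four special values cannot be nonzero on the old vertices. For $D_4(s)=0$ the edge systems $M(s)w_e=(-p_u,0,0,-p_v)^{\mathsf T}$ are singular, so solvability (orthogonality to $\ker M(s)$) imposes $p_u=p_v$ (antisymmetric case) or $p_u=-p_v$ (symmetric case) along every edge; by connectedness this makes $(p_u)$ constant, respectively a bipartition sign-pattern, i.e.\ a candidate lift of the $\sigma=0$ or $\sigma=2$ eigenvector. One must then show that the global compatibility of the old-vertex equations reduces to an identity of the form $(\text{nonzero scalar})\cdot\sum_u d_u=0$, forcing that constant to vanish, so the old part is zero and the multiplicities are exactly the incidence-kernel dimensions above. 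This is also precisely what eliminates the apparent double counting with $\sigma=0,2$ from the second paragraph. I finish by checking completeness: the multiplicities in (i)--(vi) sum to $3(N_{n-1}-1)+1+2N_{n-1}+2(E_{n-1}-N_{n-1}+1)+2(E_{n-1}-N_{n-1})=N_{n-1}+4E_{n-1}=N_n$ in the non-bipartite case, and analogously to $N_n$ in the bipartite case, so no eigenvalue is missed.
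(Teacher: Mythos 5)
Your proposal is correct and follows essentially the same route as the paper: the same $V_{old}\cup V_{new}$ split, the same $4\times 4$ tridiagonal edge elimination yielding $\sigma=\frac{4\lambda^3-10\lambda^2+5\lambda}{2\lambda^2-4\lambda+1}$ (equivalently Eq.~(\ref{eq18})), the collapse to an identity at $\lambda=\frac12,\frac32$, the singular-case solvability condition plus double-counting argument forcing the old coordinates to vanish, and the signed/unsigned incidence-matrix rank computation giving the multiplicities of $\frac{5\pm\sqrt5}{4}$ and $\frac{3\pm\sqrt5}{4}$. The only cosmetic difference is that the paper rules out $\lambda=\frac{2\pm\sqrt2}{2}$ directly (showing any such eigenvector is zero), while you dispose of that case implicitly via your final, correct multiplicity count $3(N_{n-1}-1)+1+2N_{n-1}+2(E_{n-1}-N_{n-1}+1)+2(E_{n-1}-N_{n-1})=N_{n-1}+4E_{n-1}=N_n$.
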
 

\begin{proof}\rm
    Partition the set of vertices in $H_n(G)$ as $V_{old} \cup V_{new}$, in which $V_{old}$ is the set of all the vertices inherited from $H_{n-1}(G)$, and $V_{new}$ is its complement.

    For any vertex $i\in V_{old}$, let $ V_0(i)\subset V_{old}$ be the set of old vertices in $H_n(G)$ adjacent to $i$,  and for each vertex $j_{0} \in V_{0}(i)$, $ij_{1}j_{2}j_{3}j_{4}j_0$ is the newly added parallel path of length 5 in $H_n(G)$ with respect to edge $ij_0$. Then we let $V_{1}\left( i \right) =\left\{ j_{1}:\forall j_0\in V_0(i) \right\}$ denote the set of second vertices $j_1$ in the path $ij_{1}j_{2}j_{3}j_{4}j_0$ corresponding to edge $ij_0$, for each vertex $j_{0} \in V_{0}(i)$. For convenience, we use $V_0$ to denote $V_0(i)$ and $V_1$ to denote $V_1(i)$. Similarly, let $V_{2}=\left\{ j_{2}:\forall j_0\in V_0 \right\},\ V_{3}=\left\{ j_{3}:\forall j_0\in V_0 \right\}$ and $V_{4}=\left\{ j_{4}:\forall j_0\in V_0 \right\}$.

    % Clearly, we have
    % $$
    % V_{new}=\bigcup_{i\in V_{old}}{\bigcup_{t=1}^4{V_t}}.
    % $$

    %For any vertex $i\in V_{old}$, let $ V_1\subset V_{new}$ denote the set of the new neighbors of vertex $i$ in $H_n(G)$ and $ V_0\subset V_{old}$ the set of its old neighbors. Then, let $V_2\subset V_{new}$ be the set of the neighbors of all vertices in $V_1$, $V_3\subset V_{new}$ be the set of the neighbors of all vertices in $V_2$ and $V_4\subset V_{new}$ be the set of the neighbors of all vertices in $V_3$. That is, $V_t$ is a subset of $V_{new}$ and all vertices in $V_t$ have paths of length $t$ to vertex $i$, where $t=1,2,3,4$. Obviously, for any vertex $j^{'}$ in $V_0$, there is a vertex $j^{''}$ in $V_4$ adjacent to it. There exists, from the definition of $ n-th\ hexagonal\ graph$, a bijection between $V_t$ and $V_0$. 

    Let $X=(x_1,x_2,\cdots,x_{N_n})^T$ denote an eigenvector associated with the eigenvalue $\lambda$ of $\mathcal{L}_n$ and $d_n(v)$ be the degree of vertex $v$ in $H_n(G)$, and  we have

    $$
        d_n\left( v \right) =\begin{cases}
            2d_{n-1}\left( v \right), \begin{matrix}
                                          & v\in V_{old}; \\
                                     \end{matrix}       \\
            2,\begin{matrix}
                  &  & \begin{matrix}
                           &  & \\
                      \end{matrix} & v\in V_{new} .\\
             \end{matrix} \\
        \end{cases}
    $$

    For any vertex $i\in V_{old}$, Eq.(\ref{eq3}) leads to

    \begin{equation}
        \begin{aligned}\label{pro1_1}
            \left( 1-\lambda \right) x_i
             & =\sum_{j_1\in V_1}{\frac{1}{2\sqrt{d_{n-1}\left( i \right)}}x_{j_1}}+\sum_{j_0\in V_0}{\frac{1}{2\sqrt{d_{n-1}\left( i \right) d_{n-1}\left( j_0 \right)}}x_{j_0}}.
        \end{aligned}
    \end{equation}

    Given vertex $j_0 \in V_{0}$, we consider the vertices of the parallel path corresponding to edge $ij_0$, which is denoted by $ij_{1}j_{2}j_{3}j_{4}j_0$.

    % For any vertex $j_0\in V_0$, a path of length 5 parallel to edge $i j_0$ is added between vertices $i$ and $j$, denoted as $R_i=i j_1 j_2 j_3 j_4 j_0$, where vertex $j_t\in V_t \subset V_{new},\ t=1,2,3,4$.

    For the vertex $j_1\in V_1$, we have a similar relationship
    \begin{equation}\label{pro1_2}
        \left( 1-\lambda \right) x_{j_1}=\frac{1}{2\sqrt{d_{n-1}\left( i \right)}}x_i+\frac{1}{2}x_{j_2}.
    \end{equation}

    Analogously, for the vertex $j_2\in V_2$, we obtain
    \begin{equation}\label{pro1_3}
        \left( 1-\lambda \right) x_{j_2}=\frac{1}{2}x_{j_1}+\frac{1}{2}x_{j_3}.
    \end{equation}

    For the vertex $j_3\in V_3$, we have
    \begin{equation}\label{pro1_4}
        \left( 1-\lambda \right) x_{j_3}=\frac{1}{2}x_{j_2}+\frac{1}{2}x_{j_4}.
    \end{equation}

    For the vertex $j_4\in V_4$, we get
    \begin{equation}\label{pro1_5}
        \left( 1-\lambda \right) x_{j_4}=\frac{1}{2}x_{j_3}+\frac{1}{2\sqrt{d_{n-1}\left( j_0 \right)}}x_{j_0}.
    \end{equation}

    Combining Eqs.(\ref{pro1_2}), (\ref{pro1_3}), (\ref{pro1_4}) and (\ref{pro1_5}), we get linear equations with $x_{j_1},x_{j_2},x_{j_3}, x_{j_4}$ as variables in matrix form, as follows,
    \begin{equation}\label{pro1_leqs}
        \left( \begin{matrix}
                2\left( 1-\lambda \right) & -1                        & 0                         & 0                         \\
                -1                        & 2\left( 1-\lambda \right) & -1                        & 0                         \\
                0                         & -1                        & 2\left( 1-\lambda \right) & -1                        \\
                0                         & 0                         & -1                        & 2\left( 1-\lambda \right) \\
            \end{matrix} \right) \left( \begin{array}{c}
                x_{j_1} \\
                x_{j_2} \\
                x_{j_3} \\
                x_{j_4} \\
            \end{array} \right) =\left( \begin{array}{c}
                \frac{1}{\sqrt{d_{n-1}\left( i \right)}}x_i       \\
                0                                                 \\
                0                                                 \\
                \frac{1}{\sqrt{d_{n-1}\left( j_0 \right)}}x_{j_0} \\
            \end{array} \right).
    \end{equation}
    The determinant of coefficient matrix of Equations (\ref{pro1_leqs}) is denoted by $Det(\lambda)$, that is
    $$
        Det\left( \lambda \right) =16\lambda ^4-64\lambda ^3+84\lambda ^2-40\lambda +5.
    $$

     \ref{theo1_1} When $\lambda \ne  \frac{2\pm \sqrt{2}}{2}, \frac{1}{2}, \frac{3}{2}, \frac{5\pm \sqrt{5}}{4},\frac{3\pm \sqrt{5}}{4}$, we have $Det(\lambda) \ne 0$ and $Det(\lambda)+1 \ne 0$. Equations (\ref{pro1_leqs}) has a unique solution, and $x_{j_1}$ can be determined by
    \begin{equation}\label{eq4}
        x_{j_1}=\frac{1}{Det\left( \lambda \right)}\left( \frac{1}{\sqrt{d_{n-1}\left( j_0 \right)}}x_{j_0}-\frac{8\lambda ^3-24\lambda ^2+20\lambda -4}{\sqrt{d_{n-1}\left( i \right)}}x_i \right) .
    \end{equation}

    Substituting Eq.(\ref{eq4}) into Eq.(\ref{pro1_1}) yields
    \begin{equation}
        \begin{aligned}\label{eq5}
            \left( 1-\lambda \right) x_i
            =\left( \frac{1}{Det\left( \lambda \right)}+1 \right) \sum_{j_0\in V_0}{\frac{1}{2\sqrt{d_{n-1}\left( i \right) d_{n-1}\left( j_0 \right)}}x_{j_0}}-\frac{4\lambda ^3-12\lambda ^2+10\lambda -2}{Det\left( \lambda \right)}x_i.
        \end{aligned}
    \end{equation}
    Since  $Det(\lambda)+1 \ne 0$, $ \frac{32\lambda ^5-160\lambda ^4+288\lambda ^3-224\lambda ^2+70\lambda -6}{-Det\left( \lambda \right) -1}x_i=\sum_{j_0\in V_0}{\frac{1}{\sqrt{d_{n-1}\left( i \right) d_{n-1}\left( j_0 \right)}}x_{j_0}} $ can be obtained from Eq.(\ref{eq5}), and simplify it to obtain the following formula,
    \begin{equation}\label{eq6}
        \frac{-4\lambda ^3+12\lambda ^2-9\lambda +1}{2\lambda ^2-4\lambda +1}x_i=\sum_{j_0\in V_0}{\frac{1}{\sqrt{d_{n-1}\left( i \right) d_{n-1}\left( j_0 \right)}}x_{j_0}}.
    \end{equation}

    Thus, $ \frac{4\lambda ^3-10\lambda ^2+5\lambda }{2\lambda ^2-4\lambda +1}$ is an eigenvalue of $\mathcal{L}_{n-1}$  and $X_o=\left( x_i \right) _{i\in V_{old}}^{T}$ is the corresponding eigenvector. $X$ can be obtained from $X_o$ by Equations (\ref{pro1_leqs}). Then we have $m_{\mathcal{L} _{n-1}}\left(  \frac{4\lambda ^3-10\lambda ^2+5\lambda}{2\lambda ^2-4\lambda +1} \right)$ = $m_{\mathcal{L} _n}\left( \lambda \right)$, since there is a bijection between the eigenvectors of $\mathcal{L}_n$ and $\mathcal{L}_{n-1}$.

    %$X$ can be totally determined by $X_o$ using Equations (\ref{pro1_leqs}) and $m_{\mathcal{L} _{n-1}}\left(  \frac{4\lambda ^3-10\lambda ^2+5\lambda}{2\lambda ^2-4\lambda +1} \right) \geqslant m_{\mathcal{L} _n}\left( \lambda \right)$ .

    %In fact, $m_{\mathcal{L} _{n-1}}\left(  \frac{4\lambda ^3-10\lambda ^2+5\lambda}{2\lambda ^2-4\lambda +1} \right) = m_{\mathcal{L} _n}\left( \lambda \right)$. Otherwise there exists at least an extra eigenvector $X_{o}^{'}$ associated to $ \frac{4\lambda ^3-10\lambda ^2+5\lambda}{2\lambda ^2-4\lambda +1} $ without a corresponding eigenvector in $\mathcal{L}_{n}$. But Equations (\ref{pro1_leqs}) gives $X_{o}^{'}$ an associated eigenvector of $\mathcal{L}_{n}$, which is a contradiction with

    %$$m_{\mathcal{L} _{n-1}}\left(  \frac{4\lambda ^3-10\lambda ^2+5\lambda}{2\lambda ^2-4\lambda +1} \right) > m_{\mathcal{L} _n}\left( \lambda \right)$$.

    When $\lambda = \frac{2-\sqrt{2}}{2}$, $Det(\lambda) \ne 0$, Equations (\ref{pro1_leqs}) has a unique  solution, and we can get $x_{j_1}=-\frac{1}{\sqrt{d_{n-1}(j_0)}}x_{j_0}$. Combining it with Eq.(\ref{pro1_1}), we have $x_i = 0$ for any vertex $i\in V_{old}$.
    % $$
    %     \frac{\sqrt{2}}{2}x_i=-\sum_{j_0\in V_0}{\frac{1}{2\sqrt{d_{n-1}\left( i \right) d_{n-1}\left( j_0 \right)}}x_{j_0}}+\sum_{j_0\in V_0}{\frac{1}{2\sqrt{d_{n-1}\left( i \right) d_{n-1}\left( j_0 \right)}}x_{j_0}}=0.
    % $$
    Thus, Equations (\ref{pro1_leqs}) have only zero solution, that is, $X = 0$, and $\lambda = \frac{2-\sqrt{2}}{2}$ is not an eigenvalue of $\mathcal{L}_n$. The conclusions and proof  are the same when  $\lambda = \frac{2+\sqrt{2}}{2}$.

    Let $\sigma = \frac{4\lambda ^3-10\lambda ^2+5\lambda}{2\lambda ^2-4\lambda +1}$, when $\lambda =  \frac{5\pm \sqrt{5}}{4},\frac{3\pm \sqrt{5}}{4}$, we get $\sigma=0,2$. When $\lambda = \frac{1}{2}, \frac{3}{2}$, we get $\sigma=-1,3$. Obviously, $-1,3$ are not eigenvalues of $\mathcal{L}_{n-1}$.

    Therefore, if $\sigma $ is an eigenvalue of $ \mathcal{L}_{n-1} $ such that $\sigma  \ne 0,2$, then the roots of the equation $4\lambda ^3-\left( 10+2\sigma \right) \lambda ^2+\left( 5+4\sigma \right) \lambda -\sigma =0$ with $\lambda$ as variable are the eigenvalues of $\mathcal{L}_n$.

    %This completes the proof of   \ref{theo1_1}.

     \ref{theo1_2} It is obvious from  \cite{Spectralgraphtheory}.
    %It is obvious from Lemma\ref{lemma1}.

     \ref{theo1_3} When $\lambda= \frac{1}{2},\ Det(\lambda) \ne 0$, the Equations (\ref{pro1_leqs}) has a unique definite solution. Substituting $\lambda= \frac{1}{2}$ into Equations (\ref{pro1_leqs}), we get
    \begin{equation}\label{eq7}
        x_{j_1}=-\left( \frac{1}{\sqrt{d_{n-1}\left( j_0 \right)}}x_{j_0}-\frac{1}{\sqrt{d_{n-1}\left( i \right)}}x_i \right).
    \end{equation}
    We obtain by  substituting Eq.(\ref{eq7}) and $\lambda= \frac{1}{2}$ into Eq.(\ref{pro1_1}) that
    % $$
    %     \begin{aligned}
    %         \frac{1}{2}x_i=\sum_{j_0\in V_0}{\frac{1}{2d_{n-1}\left( i \right)}}x_i=\frac{1}{2}x_i.
    %     \end{aligned}      
    % $$
    %The above equality indicates
    Eq.(\ref{pro1_1}) is an identical equation. Therefore, the eigenvectors corresponding to $\lambda= \frac{1}{2}$ can be obtained by $x_i$ and $x_{j_0}$ according to Equations (\ref{pro1_leqs}). The conclusions and proof  are the same when  $\lambda = \frac{3}{2}$. Thus, we get $m_{\mathcal{L} _n}\left(  \frac{1}{2} \right)=m_{\mathcal{L} _n}\left(  \frac{3}{2} \right)=N_{n-1}$, where $n \geqslant 1$.

     \ref{theo1_4} Substituting $\lambda =  \frac{5 -\sqrt{5}}{4}$ into the Equations (\ref{pro1_leqs}) and simplify it as follows
    \begin{equation}\label{pro1_leqs2}
        \left( \begin{matrix}
                0  & 0  & 0  & 0                    \\
                -1 & 0  & 0  & -1                   \\
                0  & -1 & 0  & \frac{1-\sqrt{5}}{2} \\
                0  & 0  & -1 & \frac{\sqrt{5}-1}{2} \\
            \end{matrix} \right) \left( \begin{array}{c}
                x_{j_1} \\
                x_{j_2} \\
                x_{j_3} \\
                x_{j_4} \\
            \end{array} \right) =\left( \begin{array}{c}
                \frac{1}{\sqrt{d_{n-1}\left( i \right)}}x_i-\frac{1}{\sqrt{d_{n-1}\left( j_0 \right)}}x_{j_0} \\
                \frac{1-\sqrt{5}}{2}\frac{1}{\sqrt{d_{n-1}\left( j_0 \right)}}x_{j_0}                         \\
                \frac{\sqrt{5}-1}{2}\frac{1}{\sqrt{d_{n-1}\left( j_0 \right)}}x_{j_0}                         \\
                \frac{1}{\sqrt{d_{n-1}\left( j_0 \right)}}x_{j_0}                                             \\
            \end{array} \right) .
    \end{equation}
    Obviously, Equations (\ref{pro1_leqs2}) has solutions if and only if
    \begin{equation}\label{eq8}
        \frac{1}{\sqrt{d_{n-1}\left( i \right)}}x_i=\frac{1}{\sqrt{d_{n-1}\left( j_0 \right)}}x_{j_0}.
    \end{equation}
    Set $ \frac{1}{\sqrt{d_{n-1}\left( i \right)}}x_i=C$, $i \in V_{old}$ due to the connectivity of $H_{n-1}(G)$.

    Substituting Eq.(\ref{eq8}) into Eq.(\ref{pro1_1}), it follows
    % $$
    %     \begin{aligned}
    %         \frac{\sqrt{5}-1}{4}\sqrt{d_{n-1}(i)}C =\frac{1}{2\sqrt{d_{n-1}\left( i \right)}}\sum_{j_1\in V_1}{x_{j_1}}+\frac{1}{2}\sqrt{d_{n-1}(i)}C ,
    %     \end{aligned}
    % $$
    %that is
    \begin{equation}\label{eq12}
        \sum_{j_1\in V_1}{x_{j_1}}=\frac{\sqrt{5}-3}{2}d_{n-1}\left( i \right) C.
    \end{equation}

    From Equations (\ref{pro1_leqs2}), we get
    \begin{equation}\label{eq9}
        x_{j_1}=-x_{j_4}-\frac{1-\sqrt{5}}{2}C.
    \end{equation}
    Substituting Eqs.(\ref{eq8}) and (\ref{eq9}) into (\ref{pro1_1}), we have
    \begin{equation}\label{eq10}
        \sum_{j_4\in V_4}{x_{j_4}}=d_{n-1}\left( i \right) C.
    \end{equation}

    By Eq.(\ref{eq12}), the sum of elements of $X$ with respect to the newly added vertices of $H_n(G)$ adjacent to the vertices in $V_{old}$ is
    \begin{equation}
        \begin{aligned}\label{eq13}
            \sum_{i\in V_{old}}{\sum_{j\in V_{new},j\sim i}{x_j}}=\sum_{i\in V_{old}}{\frac{\sqrt{5}-3}{2}d_{n-1}\left( i \right) C}=\left( \sqrt{5}-3 \right) CE_{n-1}.
        \end{aligned}
    \end{equation}
    However, by Eq.(\ref{eq10}), we also have
    \begin{equation}
        \begin{aligned}\label{eq14}
            \sum_{i\in V_{old}}{\sum_{j\in V_{new},j\sim i}{x_j}}=\sum_{i\in V_{old}}{d_{n-1}\left( i \right) C}=2CE_{n-1} .
        \end{aligned}
    \end{equation}
    Thus, from Eqs.(\ref{eq13}) and (\ref{eq14}) we get $C = 0$, that is, for any $i \in V_{old}$ we have $x_i = 0$.

    Consequently, the eigenvector $X=(x_1,x_2,\cdots,x_{N_n})^T$  corresponding to $\lambda =  \frac{5 -\sqrt{5}}{4}$ can be completely obtained by equations as follows,
    \begin{equation}
        \begin{cases}
            \begin{matrix}
                x_i=0, & for\,\,all\,\,i\in V_{old}; \\
            \end{matrix}                                                                                                                                          \\
            \begin{matrix}
                \sum_{j_4\in V_4}{x_{j_4}}=0, & for\,\,all\,\,i\in V_{old}; \\
            \end{matrix}                                                                                            \\
            \begin{matrix}
                x_{j_1}=-x_{j_4},x_{j_2}=\frac{1-\sqrt{5}}{2}x_{j_4},x_{j_3}=\frac{\sqrt{5}-1}{2}x_{j_4}, & for\,\,all\,\,i,j_0\in V_{old}\,\,and\,\,i\sim j_0; \\
            \end{matrix}\,\, \\
        \end{cases}
    \end{equation}

    Suppose $ x_{j_1}=-x_{j_4}=y_j,\ j=1,2,\cdots,E_{n-1}$ for the newly added two vertices $j_1, j_4$ corresponding to an edge in $H_{n-1}(G)$, and let $Y=(y_j)^T$ which is a vector of dimension $E_{n-1}$. Note that $\sum_{j_4\in V_4}{x_{j_4}}=0$ for all $i \in V_{old}$ can be written as $\vec{B}\left( H_{n-1}\left( G \right) \right) Y=0$, where $\vec{B}\left( H_{n-1}\left( G \right) \right)$ is the directed
    incident matrix of $H_{n-1}\left( G \right)$.
    According to  \cite{directed},
    $
        rank\left( \vec{B}\left( H_{n-1}\left( G \right) \right) \right) =N_{n-1}-1
    $.
    Thus,  $\vec{B}\left( H_{n-1}\left( G \right) \right) Y=0$ has at most $E_{n-1}-N_{n-1}+1$ linearly independent solutions, i.e., $m_{\mathcal{L} _n}\left(  \frac{5-\sqrt{5}}{4} \right)=E_{n-1}-N_{n-1}+1$.

    When $\lambda= \frac{5+\sqrt{5}}{4}$, its conclusions and proofs are similar to that of $ \frac{5-\sqrt{5}}{4}$, thus they are omitted.

     \ref{theo1_5}-\ref{theo1_6}
    % Equations (\ref{pro1_leqs}) can be expressed as 
    % \begin{equation}\label{pro1_leqs3}
    %     \left( \begin{matrix}
    %         0&		0&		0&		0\\
    %         -1&		0&		0&		1\\
    %         0&		-1&		0&		\frac{\sqrt{5}+1}{2}\\
    %         0&		0&		-1&		\frac{\sqrt{5}+1}{2}\\
    %     \end{matrix} \right) \left( \begin{array}{c}
    %         x_{j_1}\\
    %         x_{j_2}\\
    %         x_{j_3}\\
    %         x_{j_4}\\
    %     \end{array} \right) =\left( \begin{array}{c}
    %         \frac{1}{\sqrt{d_{n-1}\left( i \right)}}x_i+\frac{1}{\sqrt{d_{n-1}\left( j_0 \right)}}x_{j_0}\\
    %         \frac{\sqrt{5}+1}{2}\frac{1}{\sqrt{d_{n-1}\left( j_0 \right)}}x_{j_0}\\
    %         \frac{\sqrt{5}+1}{2}\frac{1}{\sqrt{d_{n-1}\left( j_0 \right)}}x_{j_0}\\
    %         \frac{1}{\sqrt{d_{n-1}\left( j_0 \right)}}x_{j_0}\\
    %     \end{array} \right) .
    % \end{equation}
    % Similar to the proof of  \ref{theo1_4}, we get $x_i = 0$ for any $i \in V_{old}$.
    % Therefore, 
    %When $\lambda =  \frac{3 -\sqrt{5}}{4}$, 
    Similar to the proof of  \ref{theo1_4}, the eigenvector $X=(x_1,x_2,\cdots,x_{N_n})^T$  corresponding to $\lambda =  \frac{3 -\sqrt{5}}{4}$ can be determined by the following equations,
    \begin{equation}
        \begin{cases}
            \begin{matrix}
                x_i=0, & for\,\,all\,\,i\in V_{old}; \\
            \end{matrix}                                                                                              \\
            \begin{matrix}
                \sum_{j_4\in V_4}{x_{j_4}}=0, & for\,\,all\,\,i\in V_{old}; \\
            \end{matrix}                                                \\
            \begin{matrix}
                x_{j_1}=x_{j_4},x_{j_2}=x_{j_3}=\frac{\sqrt{5}+1}{2}x_{j_4}, & for\,\,all\,\,i,j_0\in V_{old}\,\,and\,\,i\sim j_0; \\
            \end{matrix}\,\, \\
        \end{cases}
    \end{equation}

    Suppose $ x_{j_1}=x_{j_4}=y_j,\ j=1,2,\cdots,E_{n-1}$ for the newly added two vertices $j_1, j_4$ corresponding to an edge in $H_{n-1}(G)$, and let $Y=(y_j)^T$. Hence,  $\sum_{j_4\in V_4}{x_{j_4}}=0$ for all $i \in V_{old}$ are equivalent to the equation system $B\left( H_{n-1}\left( G \right) \right) Y=0$, where $B\left( H_{n-1}\left( G \right) \right)$ is the incident matrix of $H_{n-1}\left( G \right)$.
    According to  \cite{incidence_matrix}, $rank\left( B\left( H_{n-1}\left( G \right) \right) \right)$ is equal to $N_{n-1}-1$ if $G$ is bipartite and $N_{n-1}$ otherwise. Thus, $B\left( H_{n-1}\left( G \right) \right) Y=0$ has at most $E_{n-1}-N_{n-1}+1$ linearly independent solutions if $G$ is bipartite and $E_{n-1}-N_{n-1}$ otherwise. Therefore, we have $m_{\mathcal{L} _n}\left(  \frac{3-\sqrt{5}}{4} \right)=E_{n-1}-N_{n-1}+1$ when $G$ is a bipartite graph and $m_{\mathcal{L} _n}\left(  \frac{3-\sqrt{5}}{4} \right)=E_{n-1}-N_{n-1}$ when $G$ is a non-bipartite graph.

    When $\lambda= \frac{3+\sqrt{5}}{4}$, its conclusions and proofs are similar to that of $ \frac{3-\sqrt{5}}{4}$, thus they are omitted.

    % \begin{remark}
    %     Since no new odd cycles are generated in the construction of hexagonal graphs, if $G$ is bipartite, then $H_n(G)$ is bipartite, otherwise $H_n(G)$ is non-bipartite, where $n \geqslant 1$.
    % \end{remark}

    This complete the proof of Theorem \ref{theo1}. 
\end{proof}

\subsection{The normalized Laplacian spectrum of  \texorpdfstring{$H^k_{n}(G)$}{}}

\begin{theorem} \label{theo2}

    Denote by $\mathcal{L}^k_n $  the normalized Laplacian matrix of $H^k_{n}(G)$. The normalized Laplacian spectrum of $H^k_n(G)$, $n \geqslant 1$ is given below.

    % Let $G$ be a simple connected graph with $N^k_0$ vertices and $E^k_0$ edges, and $H^k_{n}(G)$ be the $n$-th $k$-hexagonal graph of $G$, where $k \geqslant 2$. The normalized Laplacian matrix of $H^k_{n}(G)$  is denoted by $\mathcal{L}^k_n $.
    % The normalized Laplacian spectrum of $H^k_n(G)$ can be obtained as follows.

    \begin{enumerate}[label=(\roman*)]
        \item \label{theo2_1}If $\sigma $ is an eigenvalue of $ \mathcal{L}^k_{n-1} $ satisfying $\sigma  \ne 0,2$, then $\lambda_i,\ i=1,2,3,4,5$ are the eigenvalues of $ \mathcal{L}^k_{n} $ with $m_{\mathcal{L}^k_{n}}(\lambda_i)=m_{\mathcal{L}^k_{n-1}}(\sigma)$, where $\lambda_i,\ i=1,2,3,4,5$ are the roots of
              \begin{equation}\label{eq19}
                  \begin{aligned}
                      \left( 16k+16 \right) \lambda ^5-\left( 80k+64+16\sigma \right) \lambda ^4+\left( 140k+84+64\sigma \right) \lambda ^3 \\
                      -\left( 100k+40+84\sigma \right) \lambda ^2
                      +\left( 25k+5+40\sigma \right) \lambda -\sigma \left( k+5 \right) =0.
                  \end{aligned}
              \end{equation}

        \item \label{theo2_2} 0, $ \frac{5k+3-\sqrt{5k^2+6k+5}}{4\left( k+1 \right)}$ and $ \frac{5k+3+\sqrt{5k^2+6k+5}}{4\left( k+1 \right)}$ are the eigenvalues of $\mathcal{L}^k_{n}$ with the multiplicity 1.

        \item \label{theo2_3} When $G$ is bipartite, 2, $ \frac{3k+5-\sqrt{5k^2+6k+5}}{4\left( k+1 \right)}$ and $ \frac{3k+5+\sqrt{5k^2+6k+5}}{4\left( k+1 \right)}$ are the eigenvalues of $\mathcal{L}^k_{n}$ with the multiplicity 1.

        \item \label{theo2_4} $ \frac{5-\sqrt{5}}{4}$ and $ \frac{5+\sqrt{5}}{4}$  are the eigenvalues of $\mathcal{L}^k_n$ with multiplicity $kE^k_{n-1}-N^k_{n-1}+1$.

        \item \label{theo2_5} When $G$ is bipartite, $ \frac{3-\sqrt{5}}{4}$ and $ \frac{3+\sqrt{5}}{4}$  are the eigenvalues of $\mathcal{L}^k_n$ with multiplicity $kE^k_{n-1}-N^k_{n-1}+1$.

        \item \label{theo2_6} When $G$ is non-bipartite, $ \frac{3-\sqrt{5}}{4}$ and $ \frac{3+\sqrt{5}}{4}$  are the eigenvalues of $\mathcal{L}^k_n$ with multiplicity $kE^k_{n-1}-N^k_{n-1}$.
    \end{enumerate}
\end{theorem}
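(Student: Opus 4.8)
The plan is to mirror the proof of Theorem \ref{theo1}, the only structural change being that each edge of $H^k_{n-1}(G)$ is replaced by one direct edge together with $k$ parallel paths of length $5$. First I would partition the vertex set as $V_{old}\cup V_{new}$ and record the degrees: an old vertex $v$ keeps its direct neighbours and gains $k$ path-ends per incident edge, so $d_n(v)=(k+1)d_{n-1}(v)$, while every internal path vertex has degree $2$. The key simplification is symmetry: for a fixed edge $ij_0$ the $k$ parallel paths $ij_1^{(s)}j_2^{(s)}j_3^{(s)}j_4^{(s)}j_0$ share the same boundary data $x_i,x_{j_0}$, so Eq.(\ref{eq3}) forces $x_{j_\ell^{(s)}}$ to be independent of $s$. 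Each bundle of $k$ paths therefore collapses to a single representative path governed by the same $4\times4$ system as Equations (\ref{pro1_leqs}), with the identical coefficient matrix and determinant $Det(\lambda)$; only the right-hand sides acquire the degree factor $\sqrt{2/(k+1)}$.

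For part \ref{theo2_1} I would assume $Det(\lambda)\neq0$, solve the inner system for $x_{j_1}$ in terms of $x_i,x_{j_0}$ as in Eq.(\ref{eq4}), and substitute into the equation at the old vertex $i$. Since the $k$ first-vertices of the parallel paths contribute identically, the path term is multiplied by $k$, while the direct edge yields the usual $P$-term scaled by $1/(k+1)$. Writing $\sum_{j_0}\frac{x_{j_0}}{\sqrt{d_{n-1}(i)d_{n-1}(j_0)}}=(1-\sigma)x_i$ for an eigenvector of $\mathcal{L}^k_{n-1}$ and dividing by $x_i$ gives a single rational identity relating $\sigma$ and $\lambda$; clearing denominators produces exactly the quintic (\ref{eq19}). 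The multiplicity equality then follows from the eigenvector bijection between $\mathcal{L}^k_{n-1}$ and $\mathcal{L}^k_n$, exactly as in Theorem \ref{theo1}.

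For parts \ref{theo2_2} and \ref{theo2_3} I would set $\sigma=0$ and $\sigma=2$ in (\ref{eq19}). The former factors as $\lambda(4\lambda^2-10\lambda+5)\bigl(4(k+1)\lambda^2-(10k+6)\lambda+(5k+1)\bigr)$ and the latter as $(\lambda-2)(4\lambda^2-6\lambda+1)\bigl(4(k+1)\lambda^2-(6k+10)\lambda+(k+5)\bigr)$, so the residual quadratics have roots $\frac{5k+3\pm\sqrt{5k^2+6k+5}}{4(k+1)}$ and $\frac{3k+5\pm\sqrt{5k^2+6k+5}}{4(k+1)}$ respectively. Because $0$ is always a simple eigenvalue of $\mathcal{L}^k_{n-1}$ and $2$ is simple iff $G$ is bipartite, each of these four values has multiplicity $1$. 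Here lies the delicate point: the remaining roots $\frac{5\pm\sqrt5}{4}$ and $\frac{3\pm\sqrt5}{4}$ of these quintics are exactly the zeros of $Det(\lambda)=(4\lambda^2-10\lambda+5)(4\lambda^2-6\lambda+1)$, hence are \emph{not} produced by the generic branch and must be supplied separately; keeping the generic and degenerate branches disjoint is the principal bookkeeping obstacle.

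For parts \ref{theo2_4}--\ref{theo2_6} I would treat the four values $\lambda=\frac{5\pm\sqrt5}{4},\frac{3\pm\sqrt5}{4}$ at which $Det(\lambda)=0$. As in parts \ref{theo1_4}--\ref{theo1_6}, the degenerate inner system forces a compatibility condition, and the $k$-analogue of the summation argument in Eqs.(\ref{eq12})--(\ref{eq14}) forces $x_i=0$ on all old vertices. The surviving freedom is one scalar per path, i.e. $kE^k_{n-1}$ parameters, subject to one homogeneous relation per old vertex; these relations form the directed (resp.\ undirected) incidence matrix of the multigraph obtained by replacing each edge of $H^k_{n-1}(G)$ by $k$ parallel edges. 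Connectivity gives directed rank $N^k_{n-1}-1$, hence multiplicity $kE^k_{n-1}-N^k_{n-1}+1$ for $\frac{5\pm\sqrt5}{4}$, while the undirected incidence matrix has rank $N^k_{n-1}-1$ or $N^k_{n-1}$ according as $G$ is bipartite or not, giving the stated multiplicities for $\frac{3\pm\sqrt5}{4}$. A final tally confirms the totals sum to $N^k_n=N^k_{n-1}+4kE^k_{n-1}$, certifying that no eigenvalue is double-counted or omitted.
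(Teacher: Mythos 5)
Your proposal is correct and follows essentially the same route as the paper's proof: the same $V_{old}\cup V_{new}$ partition and degree bookkeeping, the same $4\times 4$ inner system with determinant $Det(\lambda)$, elimination to the quintic for part (i), substitution of $\sigma=0,2$ for parts (ii)--(iii) (your explicit factorizations check out and correctly isolate the $Det$-zero roots $\frac{5\pm\sqrt{5}}{4},\frac{3\pm\sqrt{5}}{4}$ for separate treatment), and the summation-plus-incidence-matrix rank argument with the block matrix $\left( B,\dots ,B \right)$ for parts (iv)--(vi), with your final multiplicity tally $N^k_n=N^k_{n-1}+4kE^k_{n-1}$ a welcome completeness check the paper leaves implicit. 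The only details to tighten are that your blanket claim that the $k$ parallel paths carry equal values follows from Eq.(\ref{eq3}) only when $Det(\lambda)\neq 0$ (exactly the per-path freedom your parts (iv)--(vi) exploit), and that clearing denominators in part (i) requires $Det(\lambda)+k\neq 0$, which the paper secures by noting $Det(\lambda)+k>0$ for all $k\geqslant 2$ (indeed $\min_{\lambda\in\mathbb{R}}Det(\lambda)=-\frac{5}{4}$).
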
 

\begin{proof}\rm
    Partition the set of vertices in $H_n(G)$ as $V_{old} \cup V_{new}$, in which $V_{old}$ is the set of all the vertices inherited from $H_{n-1}(G)$, and $V_{new}$ is its complement. For any vertex $i\in V_{old}$, let $ V_0(i)\subset V_{old}$ be the set of old vertices in $H_n(G)$ adjacent to $i$,  and for each vertex,
    $$
        ij_{1}^{1}j_{2}^{1}j_{3}^{1}j_{4}^{1}j_0,\ ij_{1}^{2}j_{2}^{2}j_{3}^{2}j_{4}^{2}j_0,\ \cdots \,\,,\ ij_{1}^{k}j_{2}^{k}j_{3}^{k}j_{4}^{k}j_0,
    $$
    are the $k$ newly added parallel paths of length 5 in $H^k_n(G)$ with respect to edge $ij_0$.

    Then we let $V_{1}^{l}\left( i \right) =\left\{ j_{1}^{l}:\forall j_0\in V_0(i) \right\}$ denote the set of second vertices $j_1^l$ in the $l-th$ path $ij_{1}^{l}j_{2}^{l}j_{3}^{l}j_{4}^{l}j_0$ corresponding to edge $ij_0 $ for each vertex $j_{0} \in V_{0}(i)$, where $l=1,2,\cdots,k$. For convenience, we use $V_0$ to denote $V_0(i)$ and $V_1$ to denote $V_1(i)$. Similarly, let $V_{2}^{l}=\left\{ j_{2}^{l}:\forall j_0\in V_0 \right\},\ V_{3}^{l}=\left\{ j_{3}^{l}:\forall j_0\in V_0 \right\}$ and $V_{4}^{l}=\left\{ j_{4}^{l}:\forall j_0\in V_0 \right\}$.

    % Clearly, we have
    % $$
    % V_{new}=\bigcup_{i\in V_{old}}{\bigcup_{l=1}^k{\bigcup_{t=1}^4{V_{t}^{l}}}}
    % $$

    Let $X=(x_1,x_2,\cdots,x_{N^k_n})^T$ be an eigenvector with respect to the eigenvalue $\lambda$ of $\mathcal{L}^k_n$ and $d_n(v)$ be the degree of vertex $v$ of $H^k_n(G)$, then we have

    $$
        d_n\left( v \right) \begin{cases}
            (k+1)d_{n-1}\left( v \right), \begin{matrix}
                                              & v\in V_{old}; \\
                                         \end{matrix}             \\
            2, \begin{matrix}
                  &  & \begin{matrix}
                           &  & \\
                      \end{matrix} & \qquad\ \		v\in V_{new}. \\
             \end{matrix} \\
        \end{cases}
    $$

    For any vertex $i\in V_{old}$ and $l\in \left\{ 1,2,\cdots ,k \right\}$, Eq.(\ref{eq3}) leads to
    \begin{equation}\label{pro2_1}
        \begin{aligned}
            \left( 1-\lambda \right) x_i=\sum_{l=1}^k{\sum_{j_{1}^{l}\in V_{1}^{l}}{\frac{1}{\sqrt{2\left( k+1 \right) d_{n-1}\left( i \right)}}x_{j_{1}^{l}}}}+\sum_{j_0\in V_0}{\frac{1}{(k+1)\sqrt{d_{n-1}\left( i \right) d_{n-1}\left( j_0 \right)}}x_{j_0}}.
        \end{aligned}
    \end{equation}

    Given vertex $j_0 \in V_{0}$ and $l\in \left\{ 1,2,\cdots ,k \right\}$, we consider the vertices of the $l-th$ path on edge $ij_0$, indicated by $ij_{1}^{l}j_{2}^{l}j_{3}^{l}j_{4}^{l}j_0$.

    For the vertex $j_1^l \in V_{1}^l$, we have
    \begin{equation}\label{pro2_2}
        \left( 1-\lambda \right) x_{j_{1}^{l}}=\frac{1}{\sqrt{2\left( k+1 \right) d_{n-1}\left( i \right)}}x_i+\frac{1}{2}x_{j_{2}^{l}}.
    \end{equation}

    Analogously, for the vertex $j_2^l \in V_{2}^l$, we obtain
    \begin{equation}\label{pro2_3}
        \left( 1-\lambda \right) x_{j_{2}^{l}}=\frac{1}{2}x_{j_{1}^{l}}+\frac{1}{2}x_{j_{3}^{l}}.
    \end{equation}

    For the vertex $j_3^l \in V_{3}^l$, we have
    \begin{equation}\label{pro2_4}
        \left( 1-\lambda \right) x_{j_{3}^{l}}=\frac{1}{2}x_{j_{2}^{l}}+\frac{1}{2}x_{j_{4}^{l}}.
    \end{equation}

    For the vertex $j_4^l \in V_{4}^l$, we get
    \begin{equation}\label{pro2_5}
        \left( 1-\lambda \right) x_{j_{4}^{l}}=\frac{1}{2}x_{j_{3}^{l}}+\frac{1}{\sqrt{2\left( k+1 \right) d_{n-1}\left( j_0 \right)}}x_{j_0}.
    \end{equation}

    Combining Eqs.(\ref{pro2_2}), (\ref{pro2_3}), (\ref{pro2_4}) and (\ref{pro2_5}), we get linear equations with $x_{j_1},x_{j_2},x_{j_3}, x_{j_4}$ as variables in matrix form, as follows
    \begin{equation}\label{pro2_leqs}
        \left( \begin{matrix}
                2\left( 1-\lambda \right) & -1                        & 0                         & 0                         \\
                -1                        & 2\left( 1-\lambda \right) & -1                        & 0                         \\
                0                         & -1                        & 2\left( 1-\lambda \right) & -1                        \\
                0                         & 0                         & -1                        & 2\left( 1-\lambda \right) \\
            \end{matrix} \right) \left( \begin{array}{c}
                x_{j_{1}^{l}} \\
                x_{j_{2}^{l}} \\
                x_{j_{3}^{l}} \\
                x_{j_{4}^{l}} \\
            \end{array} \right) =\left( \begin{array}{c}
                \frac{1}{\sqrt{\frac{k+1}{2}d_{n-1}\left( i \right)}}x_i       \\
                0                                                              \\
                0                                                              \\
                \frac{1}{\sqrt{\frac{k+1}{2}d_{n-1}\left( j_0 \right)}}x_{j_0} \\
            \end{array} \right) .
    \end{equation}

    The determinant of coefficient matrix of Equations (\ref{pro2_leqs}) is denoted by $Det(\lambda)$, that is
    $$
        Det\left( \lambda \right) =16\lambda ^4-64\lambda ^3+84\lambda ^2-40\lambda +5.
    $$

     \ref{theo2_1} When $\lambda \ne  \frac{5\pm \sqrt{5}}{4},\frac{3\pm \sqrt{5}}{4}$, we have $Det(\lambda) \ne 0$. Equations (\ref{pro2_leqs}) has a unique  solution  and  $x_{j_{t}^{1}}=x_{j_{t}^{2}}=\cdots =x_{j_{t}^{k}}$, where $t=1,2,3,4$. $x_{j^l_1}$ can be determined by
    \begin{equation}\label{eq15}
        x_{j_{1}^{l}}=\frac{1}{Det\left( \lambda \right)}\left( \frac{1}{\sqrt{\frac{k+1}{2}d_{n-1}\left( j_0 \right)}}x_{j_0}-\frac{8\lambda ^3-24\lambda ^2+20\lambda -4}{\sqrt{\frac{k+1}{2}d_{n-1}\left( i \right)}}x_i \right) .
    \end{equation}

    %In view of Equations (\ref{pro2_leqs}), we have $x_{j_{t}^{1}}=x_{j_{t}^{2}}=\cdots =x_{j_{t}^{k}}$, where $t=1,2,3,4$.
    Substituting Eq.(\ref{eq15}) into Eq.(\ref{pro2_1}) yields
    \begin{equation}\label{eq16}
        \begin{aligned}
             & \left( 1-\lambda \right) x_i                                                                                                                                                                                                                                                              \\
             & \quad=\left( \frac{k}{Det\left( \lambda \right)}+1 \right) \sum_{j_0\in V_0}{\frac{1}{\left( k+1 \right) \sqrt{d_{n-1}\left( i \right) d_{n-1}\left( j_0 \right)}}x_{j_0}}-\frac{k\left( 8\lambda ^3-24\lambda ^2+20\lambda -4 \right)}{\left( k+1 \right) Det\left( \lambda \right)}x_i.
        \end{aligned}
    \end{equation}

    Note that $Det(\lambda)+k>0$ for any $k \geqslant 2$. Then by Eq.(\ref{eq16}) we have
    \begin{equation}\label{eq17}
        \left( 1-\frac{\varphi \left( \lambda \right)}{Det\left( \lambda \right) +k} \right) x_i=\sum_{j_0\in V_0}{\frac{1}{ \sqrt{d_{n-1}\left( i \right) d_{n-1}\left( j_0 \right)}}x_{j_0}},
    \end{equation}
    where $\varphi \left( \lambda \right) =\left( 16k+16 \right) \lambda ^5-\left( 80k+64 \right) \lambda ^4+\left( 140k+84 \right) \lambda ^3-\left( 100k+40 \right) \lambda ^2+\left( 25k+5 \right)
    $.

    %$\left( 1-\frac{\varphi \left( \lambda \right)}{Det\left( \lambda \right) +k} \right)$ is an eigenvalue of $P^k_n$. Thus

    Eq.(\ref{eq17}) shows that $ \frac{\varphi \left( \lambda \right)}{Det\left( \lambda \right) +k}$ is an eigenvalue of $\mathcal{L}_{n-1}^k$  and $X_o=\left( x_i \right) _{i\in V_{old}}^{T}$ is one associated eigenvector.
    $X$ can be totally determined by $X_o$ using Equations (\ref{pro2_leqs}). Then we have $m_{\mathcal{L}^k _{n-1}}\left(  \frac{\varphi \left( \lambda \right)}{Det\left( \lambda \right) +k} \right) = m_{\mathcal{L}^k _n}\left( \lambda \right)$, since there is a bijection  between the eigenvectors of $\mathcal{L}_n$ and $\mathcal{L}_{n-1}$.

    %In fact, $m_{\mathcal{L}^k _{n-1}}\left(  \frac{\varphi \left( \lambda \right)}{Det\left( \lambda \right) +k} \right) = m_{\mathcal{L} _n^k}\left( \lambda \right)$. Otherwise there exists at least an extra eigenvector $X_{o}^{'}$ associated to $ \frac{\varphi \left( \lambda \right)}{Det\left( \lambda \right) +k} $ without a corresponding eigenvector in $\mathcal{L}^k_{n}$. But Equations (\ref{pro2_leqs}) gives $X_{o}^{'}$ an associated eigenvector of $\mathcal{L}^k_{n}$, which is a contradiction with $m_{\mathcal{L}^k _{n-1}}\left(  \frac{\varphi \left( \lambda \right)}{Det\left( \lambda \right) +k} \right) > m_{\mathcal{L}^k _n}\left( \lambda \right)$.

    Let $\sigma = \frac{\varphi \left( \lambda \right)}{Det\left( \lambda \right) +k}$, when $\lambda =  \frac{5\pm \sqrt{5}}{4},\frac{3\pm \sqrt{5}}{4}$, we get $\sigma=0,2$. Therefore, if $\sigma $ is an eigenvalue of $ \mathcal{L}^k_{n-1} $ such that $\sigma  \ne 0,2$, then the roots of the Eq.(\ref{eq19}) with $\lambda$ as variable are the eigenvalues of $\mathcal{L}^k_n$.

     \ref{theo2_2}-\ref{theo2_3}
    Suppose $\lambda$ is an eigenvalue of $\mathcal{L}^k_n$ satisfying $\lambda \ne  \frac{5\pm \sqrt{5}}{4},\frac{3\pm \sqrt{5}}{4}$. By the proof of  \ref{theo2_1}, we have that
    $$
        \sigma =\frac{\varphi \left( \lambda \right)}{Det\left( \lambda \right) +k}
    $$
    is an eigenvalue $\mathcal{L}^k_{n-1}$ that has the same multiplicity as $\lambda$. Hence, if $\lambda$  is a root of Eq.(\ref{eq19}) such that $\lambda \ne \frac{5\pm \sqrt{5}}{4},\frac{3\pm \sqrt{5}}{4}$, then $\lambda$ is an eigenvalue of $\mathcal{L}^k_n$.
    % \begin{equation}\label{eq20}
    %     \begin{aligned}
    %     \left( 16k+16 \right) \lambda ^5-\left( 80k+64+16\sigma \right) \lambda ^4+\left( 140k+80+64\sigma \right) \lambda ^3\\
    %     -\left( 100k+40+84\sigma \right) \lambda ^2
    %     +\left( 25k+5+40\sigma \right) \lambda -\sigma \left( k+5 \right) =0.    
    %     \end{aligned}  
    % \end{equation}

    Substituting $\sigma=0$ into Eq.(\ref{eq19}) yields $\lambda=0, \frac{5k+3\pm \sqrt{5k^2+6k+5}}{4\left( k+1 \right)}$ and substituting $\sigma=2$ into Eq.(\ref{eq19}) yields $\lambda=2, \frac{3k+5\pm \sqrt{5k^2+6k+5}}{4\left( k+1 \right)}$.

     \ref{theo2_4} Similar to the proof of Theorem \ref{theo1}\ref{theo1_4}, the eigenvector $X=(x_1,x_2,\cdots,x_{N^k_n})^T$  corresponding to $\lambda =  \frac{5 -\sqrt{5}}{4}$ can be completely obtained by equations as follows,
    $$
        \begin{cases}
            \begin{matrix}
                x_i=0, & for\,\,all\,\,i\in V_{old}; \\
            \end{matrix}                                                                                                                                                                                               \\
            \begin{matrix}
                \sum_{l=1}^k{\sum_{j_{4}^{l}\in V_{4}^{l}}{x_{j_{4}^{l}}}}=0, & for\,\,all\,\,i\in V_{old}; \\
            \end{matrix}                                                                             \\
            \begin{matrix}
                x_{j_{1}^{l}}=-x_{j_{4}^{l}},x_{j_{2}^{l}}=\frac{1-\sqrt{5}}{2}x_{j_{4}^{l}},x_{j_{3}^{l}}=\frac{\sqrt{5}-1}{2}x_{j_{4}^{l}}, & for\,\,all\,\,i,j_0\in V_{old}\,\,and\,\,i\sim j_0, l=1,2,\cdots ,k; \\
            \end{matrix}\,\, \\
        \end{cases}
    $$

    Suppose
    $
        % \begin{aligned}
        %     x_{j_{1}^{1}}=-x_{j_{4}^{1}}=y_{j}^{1}&,\  j=1,2,\cdots E^k_{n-1};\\
        %     x_{j_{1}^{2}}=-x_{j_{4}^{2}}=y_{j}^{2}&,\  j=1,2,\cdots E^k_{n-1};\\
        %     &\vdots \\
        %     x_{j_{1}^{k}}=-x_{j_{4}^{k}}=y_{j}^{k}&,\  j=1,2,\cdots E^k_{n-1}.
        % \end{aligned}
        x_{j_{1}^{l}}=-x_{j_{4}^{l}}=y_{j}^{l}, j=1,2,\cdots E_{n-1}^{k},l=1,2,\cdots ,k,
    $ for the newly added two vertices $j^l_1, j^l_4$ corresponding to an edge in $H^k_{n-1}(G)$.

    Let $Y=\left( y_{1}^{1},\cdots ,y_{E^k_{n-1}}^{1},y_{1}^{2},\cdots ,y_{E^k_{n-1}}^{2},\cdots ,y_{1}^{k},\cdots ,y_{E^k_{n-1}}^{k} \right) ^T$ which is a vector of dimension $kE^k_{n-1}$. Note that $\sum_{l=1}^k{\sum_{j_{4}^{l}\in V_{4}^{l}}{x_{j_{4}^{l}}}}=0$ for all $i \in V_{old}$ can be written as $\vec{W}Y=0$, where $\vec{W}=\left( \vec{B}\left( H_{n-1}^{k}\left( G \right) \right) ,\vec{B}\left( H_{n-1}^{k}\left( G \right) \right) ,\cdots ,\vec{B}\left( H_{n-1}^{k}\left( G \right) \right) \right)$ is a matrix with dimension $N_{n-1}^{k}\times kE_{n-1}^{k}$ and $\vec{B}\left( H^k_{n-1}\left( G \right) \right)$ is the directed incident matrix of $H^k_{n-1}\left( G \right)$. We know \newline
    $rank(\vec{W})=rank(\vec{B}\left( H^k_{n-1}\left( G \right) \right)) =N^k_{n-1}-1$, thus $\vec{W}Y=0$ has at most $kE^k_{n-1}-N^k_{n-1}+1$ linearly independent solutions, then we have $m_{\mathcal{L}^k _n}\left(  \frac{5-\sqrt{5}}{4} \right)=kE^k_{n-1}-N^k_{n-1}+1$.

    % when $\lambda= \frac{5+\sqrt{5}}{4}$, its conclusions and proofs are similar to that of $ \frac{5-\sqrt{5}}{4}$, thus they are omitted.
    %  When $\lambda =  \frac{3 -\sqrt{5}}{4}$, Equations (\ref{pro2_leqs}) can be expressed as 
    % \begin{equation}\label{pro2_leqs3}
    %     \left( \begin{matrix}
    %         0&		0&		0&		0\\
    %         -1&		0&		0&		1\\
    %         0&		-1&		0&		\frac{\sqrt{5}+1}{2}\\
    %         0&		0&		-1&		\frac{\sqrt{5}+1}{2}\\
    %     \end{matrix} \right) \left( \begin{array}{c}
    %         x_{j_{1}^{l}}\\
    %         x_{j_{2}^{l}}\\
    %         x_{j_{3}^{l}}\\
    %         x_{j_{4}^{l}}\\
    %     \end{array} \right) =\left( \begin{array}{c}
    %         \frac{1}{\sqrt{\frac{k+1}{2}d_{n-1}\left( i \right)}}x_i+\frac{1}{\sqrt{\frac{k+1}{2}d_{n-1}\left( j_0 \right)}}x_{j_0}\\
    %         \frac{\sqrt{5}+1}{2}\frac{1}{\sqrt{\frac{k+1}{2}d_{n-1}\left( j_0 \right)}}x_{j_0}\\
    %         \frac{\sqrt{5}+1}{2}\frac{1}{\sqrt{\frac{k+1}{2}d_{n-1}\left( j_0 \right)}}x_{j_0}\\
    %         \frac{1}{\sqrt{\frac{k+1}{2}d_{n-1}\left( j_0 \right)}}x_{j_0}\\
    %     \end{array} \right) .
    % \end{equation}
    % Therefore,  we get $x_i = 0$ for any $i \in V_{old}$.

     \ref{theo2_5}-\ref{theo2_6} Similar to the proof of Theorem \ref{theo1}\ref{theo1_4}, the eigenvector $X=(x_1,x_2,\cdots,x_{N^k_n})^T$  corresponding to $\lambda =  \frac{3 -\sqrt{5}}{4}$ can be completely obtained by equations as follows,
    $$
        \begin{cases}
            \begin{matrix}
                x_i=0, & for\,\,all\,\,i\in V_{old}; \\
            \end{matrix}                                                                                                                                            \\
            \begin{matrix}
                \sum_{l=1}^k{\sum_{j_{4}^{l}\in V_{4}^{l}}{x_{j_{4}^{l}}}}=0, & for\,\,all\,\,i\in V_{old}; \\
            \end{matrix}                       \\
            \begin{matrix}
                x_{j_{1}^{l}}=x_{j_{4}^{l}},x_{j_{2}^{l}}=x_{j_{3}^{l}}=\frac{\sqrt{5}+1}{2}x_{j_{4}^{l}}, & for\,\,all\,\,i,j_0\in V_{old}\,\,and\,\,i\sim j_0,l=1,2,\cdots ,k; \\
            \end{matrix}\,\, \\
        \end{cases}
    $$

    Suppose $x_{j_{1}^{l}}=x_{j_{4}^{l}}=y_{j}^{l}, j=1,2,\cdots E_{n-1}^{k},l=1,2,\cdots ,k,$ for the newly added two vertices $j^l_1, j^l_4$ corresponding to an edge in $H^k_{n-1}(G)$.
    % $$
    % \begin{aligned}
    %     % x_{j_{1}^{1}}=x_{j_{4}^{1}}=y_{j}^{1}&,\  j=1,2,\cdots E^k_{n-1};\\
    %     % x_{j_{1}^{2}}=x_{j_{4}^{2}}=y_{j}^{2}&,\  j=1,2,\cdots E^k_{n-1};\\
    %     % &\vdots \\
    %     % x_{j_{1}^{k}}=x_{j_{4}^{k}}=y_{j}^{k}&,\  j=1,2,\cdots E^k_{n-1}.
    % \end{aligned}
    % $$

    Let $Y=\left( y_{1}^{1},\cdots ,y_{E^k_{n-1}}^{1},y_{1}^{2},\cdots ,y_{E^k_{n-1}}^{2},\cdots ,y_{1}^{k},\cdots ,y_{E^k_{n-1}}^{k} \right) ^T$ which is a $kE^k_{n-1}$ dimensional vector. Note that $\sum_{l=1}^k{\sum_{j_{4}^{l}\in V_{4}^{l}}{x_{j_{4}^{l}}}}=0$ for all $i \in V_{old}$ can be written as $WY=0$, where $W=\left( B\left( H_{n-1}^{k}\left( G \right) \right) ,B\left( H_{n-1}^{k}\left( G \right) \right) ,\cdots ,B\left( H_{n-1}^{k}\left( G \right) \right) \right)$ is a matrix with dimension $N_{n-1}^{k}\times kE_{n-1}^{k}$
    and $B\left( H^k_{n-1}\left( G \right) \right)$ is the  incident matrix of $H^k_{n-1}\left( G \right)$. We know $rank(W)=rank(B\left( H^k_{n-1}\left( G \right) \right))=N^k_{n-1}-1$ if $G$ is bipartite and $N^k_{n-1}$ otherwise. Thus, $WY=0$ has at most $kE^k_{n-1}-N^k_{n-1}+1$ linearly independent solutions if $G$ is a bipartite graph, otherwise $kE^k_{n-1}-N^k_{n-1}$.  Therefore, we have $m_{\mathcal{L}^k _n}\left(  \frac{3-\sqrt{5}}{4} \right)=kE^k_{n-1}-N^k_{n-1}+1$ when $G$ is a bipartite graph and $m_{\mathcal{L}^k _n}\left(  \frac{3-\sqrt{5}}{4} \right)=kE^k_{n-1}-N^k_{n-1}$ when $G$ is a non-bipartite graph.

    When $\lambda= \frac{3+\sqrt{5}}{4}$, its conclusions and proofs are similar to that of $ \frac{3-\sqrt{5}}{4}$, thus they are omitted.

    This complete the proof of Theorem \ref{theo2}. 
\end{proof}

\section{Applications}
%From the normalized Laplacian spectrum of the $n$-th  hexagonal graph $H_{n}(G)$ (resp. $n$-th $k$-hexagonal graph $H^k_{n}(G)$), we compute certain structure-related invariants

In this section, We will provide complete formulas to calculate the Kemeny's constant,  multiplicative degree-Kirchhoff index, and the number of spanning trees of $H_{n}(G)$  (resp. $H^k_{n}(G)$).

\subsection{The Kemeny's constant, multiplicative degree-Kirchhoff index  and spanning trees of \texorpdfstring{$H_{n}(G)$}{}}
Let $f_1(\sigma), f_2(\sigma)$ and $f_3(\sigma)$ be the three roots of Eq.(\ref{eq18}). By Vieta's Theorem, we obtain
\begin{equation}\label{eq27}
    \frac{1}{f_1\left( \sigma \right)}+\frac{1}{f_2\left( \sigma \right)}+\frac{1}{f_2\left( \sigma \right)}=\frac{5+4\sigma}{\sigma},\ f_1\left( \sigma \right) f_2\left( \sigma \right) f_3\left( \sigma \right) =\frac{\sigma}{4}.
\end{equation}

Following are three multisets defined by a multiset $M$,
$$
    f_1\left( M \right) =\bigcup_{\chi \in M}{\left\{ f_1\left( \chi \right) \right\}},\ f_2\left( M \right) =\bigcup_{\chi \in M}{\left\{ f_2\left( \chi \right) \right\}},\ f_3\left( M \right) =\bigcup_{\chi \in M}{\left\{ f_3\left( \chi \right) \right\}}.
$$

\begin{theorem} \label{theo43}
    The Kemeny's constant $K(H_{n-1}(G))$ of $H_n(G)$ can be obtained by iteration of the following formula,
    $$
        \begin{aligned}
            K\left( H_n\left( G \right) \right) =5K\left( H_{n-1}\left( G \right) \right) -\frac{4}{3}N_0+\left( \frac{104}{15}6^{n-1}+\frac{16}{15} \right) E_0-2.
        \end{aligned}
    $$
    In general,
    $$
        \begin{aligned}
            K\left( H_n\left( G \right) \right) & =5^nK\left( G \right) -\frac{1}{3}\left( 5^n-1 \right) N_0+\frac{104}{3}\left[ \left( \frac{6}{5} \right) ^n-1 \right] \cdot 5^{n-1}E_0 \\
                                                & \quad+\frac{4}{15}\left( 5^n-1 \right) E_0
            -\frac{1}{2}\left( 5^n-1 \right) .
        \end{aligned}
    $$
\end{theorem}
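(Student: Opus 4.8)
The plan is to invoke the standard spectral formula for the Kemeny's constant, namely that for a connected graph on $N$ vertices with normalized Laplacian eigenvalues $0=\lambda_1<\lambda_2\le\cdots\le\lambda_N$ one has $K=\sum_{i=2}^{N}\frac{1}{\lambda_i}$. Computing $K(H_n(G))$ therefore reduces to summing the reciprocals of all nonzero eigenvalues of $\mathcal{L}_n$, and Theorem \ref{theo1} already provides a complete list of these eigenvalues together with their multiplicities. I would organize the reciprocal sum according to the six items of Theorem \ref{theo1} and handle each group in turn.

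For the inherited eigenvalues of item \ref{theo1_1} I would avoid solving the cubic (\ref{eq18}) explicitly. For each eigenvalue $\sigma\ne 0,2$ of $\mathcal{L}_{n-1}$, the three associated roots satisfy, by the Vieta relations (\ref{eq27}), $\frac{1}{f_1(\sigma)}+\frac{1}{f_2(\sigma)}+\frac{1}{f_3(\sigma)}=\frac{5+4\sigma}{\sigma}=\frac{5}{\sigma}+4$. Summing over all such $\sigma$ (with multiplicity) contributes $5\sum_{\sigma\ne0,2}\frac{m(\sigma)}{\sigma}+4\,(\text{number of eigenvalues }\sigma\ne0,2)$, which I would rewrite in terms of $K(H_{n-1}(G))$, the multiplicity of the eigenvalue $2$, and $N_{n-1}$. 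For the explicit eigenvalues of items \ref{theo1_3}--\ref{theo1_6} I would compute the reciprocal sums directly: the pair $\{\tfrac12,\tfrac32\}$ yields $2+\tfrac23=\tfrac83$ per unit multiplicity, the pair $\{\tfrac{5\pm\sqrt5}{4}\}$ yields $2$, and the pair $\{\tfrac{3\pm\sqrt5}{4}\}$ yields $6$. Multiplying by the multiplicities $N_{n-1}$ and $E_{n-1}-N_{n-1}+1$ (resp. $E_{n-1}-N_{n-1}$) from Theorem \ref{theo1}, and adding the contribution $\tfrac12$ of the eigenvalue $2$ from item \ref{theo1_2} in the bipartite case, I would collect all terms.

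A key structural point is that the resulting recurrence must not depend on the bipartiteness of $G$. Since bipartiteness is inherited by $H_{n-1}(G)$, in the bipartite case the eigenvalue $2$ of $\mathcal{L}_{n-1}$ has multiplicity exactly $1$, so excluding $\sigma=2$ from item \ref{theo1_1} removes $5\cdot\tfrac12$ from the reciprocal part and reduces the counting part by $4$, a net discrepancy of $-\tfrac{13}{2}$ relative to the non-bipartite case; meanwhile item \ref{theo1_2} restores $+\tfrac12$ and items \ref{theo1_5}/\ref{theo1_6} differ by one extra factor $6$, for a net $+\tfrac{13}{2}$. I expect these to cancel exactly, so that both cases yield the single recurrence
\[
K(H_n(G))=5\,K(H_{n-1}(G))-\tfrac{4}{3}N_{n-1}+8E_{n-1}-2.
\]
Verifying this cancellation carefully is the step I regard as the main obstacle, since this is precisely where a sign or multiplicity slip would silently break the uniform formula.

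Finally, I would substitute the $k=1$ specializations $E_{n-1}=6^{n-1}E_0$ and $N_{n-1}=N_0+\tfrac45(6^{n-1}-1)E_0$ recorded before the theorem; grouping the $E_0$-terms turns the coefficient into $\tfrac{104}{15}6^{n-1}+\tfrac{16}{15}$, giving the displayed one-step formula. To obtain the general closed form I would solve the linear recurrence $K_n=5K_{n-1}+g(n)$ with $K_0=K(G)$ via $K_n=5^nK(G)+\sum_{j=1}^{n}5^{n-j}g(j)$, splitting $g(j)$ into a constant part and a part proportional to $6^{j-1}$. The geometric sum $\sum_{j=1}^{n}5^{n-j}6^{j-1}=5^n\big[(6/5)^n-1\big]=5\cdot5^{n-1}\big[(6/5)^n-1\big]$ produces the $\tfrac{104}{3}\big[(6/5)^n-1\big]5^{n-1}E_0$ term, while $\sum_{j=1}^{n}5^{n-j}=\tfrac{5^n-1}{4}$ produces the $N_0$, $E_0$, and constant terms of the stated formula. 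This last part is routine arithmetic once the recurrence is in hand.
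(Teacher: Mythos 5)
Your proposal is correct and follows essentially the same route as the paper: both use $K=\sum_{i\geqslant 2}1/\lambda_i$, the Vieta relations (\ref{eq27}) for the inherited eigenvalues, direct reciprocal sums for the exceptional eigenvalues of Theorem \ref{theo1}, and then solve the resulting linear recurrence $K(H_n(G))=5K(H_{n-1}(G))-\tfrac{4}{3}N_{n-1}+8E_{n-1}-2$ in closed form. The only difference is bookkeeping for the bipartite case: the paper absorbs $\sigma=2$ into the Vieta sum by noting the values $f_i(2)$ (which is why its formula holds ``whether $G$ is bipartite''), whereas you exclude $\sigma=2$ and verify the $\pm\tfrac{13}{2}$ cancellation explicitly --- two presentations of the same observation, and your arithmetic checks out.
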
 
\begin{proof}\rm
    Denote the normalized Laplacian spectrum of $H_{n}(G)$ by $0 = \lambda^{(n)}_1 < \lambda^{(n)}_2 \leqslant \cdots \leqslant \lambda^{(n)}_{N_{n}}\leqslant 2$. According to  \cite{Algebraic_aspects_NL}, $
        K\left( H_n(G) \right) =\sum_{i=2}^{N_n}{\frac{1}{\lambda^{(n)}_i}}
    $ and note that $f_1\left( 2 \right) =\frac{3+\sqrt{5}}{2},f_2\left( 2 \right) =\frac{3-\sqrt{5}}{2},f_3\left( 2 \right) =2$. Whether $G$ is bipartite, we have  Eq.(\ref{eq27}) and Theorem \ref{theo1},
    $$
        \begin{aligned}
            K\left( H_n\left( G \right) \right) & =\sum_{i=2}^{N_{n-1}}{\left( \frac{5+4\lambda^{(n-1)}_i}{\lambda^{(n-1)}_i} \right)}+8E_{n-1}-\frac{4}{3}N_{n-1}-2  \\
                                                & =5K\left( H_{n-1}\left( G \right) \right) -\frac{4}{3}N_0+\left( \frac{104}{15}6^{n-1}+\frac{16}{15} \right) E_0-2.
        \end{aligned}
    $$
    From the above equation we obtain
    $$
        \begin{aligned}
            K\left( H_n\left( G \right) \right) & =5^nK\left( G \right) +\sum_{i=0}^{n-1}{5^{n-1-i}\left[ -\frac{4}{3}N_0+\left( \frac{104}{15}6^i+\frac{16}{15} \right) E_0-2 \right]}   \\
                                                & =5^nK\left( G \right) -\frac{1}{3}\left( 5^n-1 \right) N_0+\frac{104}{3}\left[ \left( \frac{6}{5} \right) ^n-1 \right] \cdot 5^{n-1}E_0
            \\&\quad
            +\frac{4}{15}\left( 5^n-1 \right) E_0-\frac{1}{2}\left( 5^n-1 \right) .\qquad\qquad \qquad \qquad \qquad \qquad 
        \end{aligned}
    $$
    %This result is a direct consequence of Eq.(\ref{eq28}) and Theorem \ref{theo42}. 
\end{proof}

\begin{theorem} \label{theo42}
    The relationship between the multiplicative degree-Kirchhoff indices of $H_n(G)$ and $H_{n-1}(G)$ is
    $$
        \begin{aligned}
            Kf^{'}\left( H_n\left( G \right) \right) & =30Kf^{'}\left( H_{n-1}\left( G \right) \right) -16\cdot 6^{n-1}E_0N_0+\left( \frac{416}{5}\cdot 6^{2n-2}+\frac{64}{5}\cdot 6^{n-1} \right) E_{0}^{2} \\
                                                     & \quad-24\cdot 6^{n-1}E_0.
        \end{aligned}
    $$
    Furthermore,
    \begin{equation}\label{eq29}
        \begin{aligned}
            Kf^{'}\left( H_n\left( G \right) \right) & =30^nKf^{'}\left( G \right) -4\cdot 6^{n-1}\left( 5^n-1 \right) E_0N_0+\frac{16}{5}\cdot 6^{n-1}\left( 5^n-1 \right) E_{0}^{2} \\
                                                     & \quad+416\cdot 30^{n-1}\left[ \left( \frac{6}{5} \right) ^n-1 \right] E_{0}^{2}-6\cdot 6^{n-1}\left( 5^n-1 \right) E_0.
        \end{aligned}
    \end{equation}
\end{theorem}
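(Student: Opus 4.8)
The plan is to exploit the standard identity relating the multiplicative degree-Kirchhoff index to the normalized Laplacian spectrum and thereby reduce the statement to a corollary of Theorem \ref{theo43}. By \cite{1}, for any connected graph of size $E$ and order $N$ with normalized Laplacian eigenvalues $0=\lambda_1<\lambda_2\leqslant\cdots\leqslant\lambda_N$, one has $Kf'(G)=2E\sum_{i=2}^{N}\frac{1}{\lambda_i}$. Since the proof of Theorem \ref{theo43} uses exactly $K(H_n(G))=\sum_{i=2}^{N_n}\frac{1}{\lambda_i^{(n)}}$, this identity specializes to $Kf'(H_n(G))=2E_n\,K(H_n(G))$, where $E_n=6^nE_0$ is the $k=1$ case of $E_n^k=(5k+1)^nE_0^k$. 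Hence everything follows by multiplying the already-established formulas for $K(H_n(G))$ by $2E_n$, together with the base-level relation $Kf'(G)=2E_0K(G)$ used to eliminate $K(G)$ in favor of $Kf'(G)$.

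First I would record $Kf'(H_n(G))=2\cdot 6^nE_0\,K(H_n(G))$ and $Kf'(H_{n-1}(G))=2\cdot 6^{n-1}E_0\,K(H_{n-1}(G))$. To obtain the recursion, solve the latter for $K(H_{n-1}(G))$, substitute into the recursive formula for $K$ from Theorem \ref{theo43}, and multiply through by $2\cdot 6^nE_0$. The leading term $5K(H_{n-1}(G))$ becomes $5\cdot 6\cdot Kf'(H_{n-1}(G))=30\,Kf'(H_{n-1}(G))$, and the remaining terms collapse after writing $6^n=6\cdot 6^{n-1}$ and simplifying the rational constants; for instance the coefficient of $6^{2n-2}E_0^2$ arises as $\frac{208}{15}\cdot 6=\frac{416}{5}$, and that of $6^{n-1}E_0^2$ as $\frac{32}{15}\cdot 6=\frac{64}{5}$, while the $N_0$- and $E_0$-terms give $-16\cdot 6^{n-1}E_0N_0$ and $-24\cdot 6^{n-1}E_0$.

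To obtain the closed form of Eq.(\ref{eq29}), I would instead multiply the explicit expression for $K(H_n(G))$ by $2\cdot 6^nE_0$ and use $K(G)=Kf'(G)/(2E_0)$ to turn the $5^nK(G)$ term into $6^n5^nKf'(G)=30^nKf'(G)$. The other four summands match Eq.(\ref{eq29}) after combining powers of $5$ and $6$: the $\frac{104}{3}[(6/5)^n-1]5^{n-1}E_0$ term yields $\frac{208}{15}30^n[(6/5)^n-1]E_0^2=416\cdot 30^{n-1}[(6/5)^n-1]E_0^2$, while the $-\frac{1}{3}(5^n-1)N_0$, $\frac{4}{15}(5^n-1)E_0$, and $-\frac{1}{2}(5^n-1)$ terms produce the stated $6^{n-1}(5^n-1)$ coefficients.

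There is no conceptual obstacle once the proportionality $Kf'(H_n(G))=2E_n\,K(H_n(G))$ is in place, so this theorem is essentially a bookkeeping corollary of Theorem \ref{theo43}. The only point demanding care is the tracking of the exponents of $6$, $5$, and $30$ and the reduction of the rational coefficients: because the prefactor is $E_n=6^nE_0$ rather than $E_{n-1}$, an off-by-one in the power of $6$ would propagate through every term, so I would double-check each coefficient against both the recursive and closed-form expressions as a consistency test.
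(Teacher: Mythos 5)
Your proposal is correct and takes essentially the same route as the paper, whose entire proof is the single observation that $Kf'\left( H_n(G) \right) = 2E_n \cdot K\left( H_n(G) \right)$ followed by an appeal to Theorem \ref{theo43}; you have simply made the paper's implicit bookkeeping explicit, and your coefficient reductions (e.g.\ $\tfrac{208}{15}\cdot 6 = \tfrac{416}{5}$, $\tfrac{32}{15}\cdot 6=\tfrac{64}{5}$, and $\tfrac{208}{15}\cdot 30^n = 416\cdot 30^{n-1}$) all check out against Eq.~(\ref{eq29}).
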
 
\begin{proof}\rm
    % $$
    % \begin{aligned}
    %     Kf^{'}\left( H_n\left( G \right) \right) 
    %     &=12E_{n-1}\left[ \sum_{i=2}^{N_{n-1}}{\left( \frac{5+4\sigma _i}{\sigma _i} \right)}+8E_{n-1}-\frac{4}{3}N_{n-1}-2 \right] \\
    %     &=30Kf^{'}\left( H_{n-1}\left( G \right) \right) -16\cdot 6^{n-1}E_0N_0+\left( \frac{416}{5}\cdot 6^{2n-2}+\frac{64}{5}\cdot 6^{n-1} \right) E_{0}^{2}\\
    %     &\quad-24\cdot 6^{n-1}E_0.
    % \end{aligned}
    % $$
    % Therefore, from the above equation we obtain
    % $$
    % \begin{aligned}
    %     Kf^{'}\left( H_n\left( G \right) \right) &=30^nKf^{'}\left( G \right) \\
    %     &\quad+\sum_{i=0}^{n-1}{30^{n-1-i}\left[ -16\cdot 6^iE_0N_0+\left( \frac{416}{5}\cdot 6^{2i}+\frac{64}{5}\cdot 6^i \right) E_{0}^{2}-24\cdot 6^iE_0 \right]}\\
    %     &=30^nKf^{'}\left( G \right) -4\cdot 6^{n-1}\left( 5^n-1 \right) E_0N_0+\frac{16}{5}\cdot 6^{n-1}\left( 5^n-1 \right) E_{0}^{2}\\
    %     &\quad+416\cdot 30^{n-1}\left[ \left( \frac{6}{5} \right) ^n-1 \right] E_{0}^{2}-6\cdot 6^{n-1}\left( 5^n-1 \right) E_0.
    % \end{aligned}
    % $$
    % The proof is completed.
    According to  \cite{Algebraic_aspects_NL} and  \cite{1}, we have $Kf^{'}\left( H_n(G) \right) =2E_n\cdot K\left( H_n(G) \right)$. Therefore, this result can be obtained from Theorem \ref{theo43}. 
\end{proof}

\begin{theorem} \label{404}
    The number of spanning trees of $H_n\left( G \right)$ is
    \begin{equation}\label{eq30}
        \begin{aligned}
            \tau \left( H_n\left( G \right) \right) =5^{\left( \frac{1}{5}\cdot 6^{n-1}+\frac{4}{5} \right) E_0-N_0+1}\cdot 6^{N_0+\frac{4}{5}\left( 6^{n-1}-1 \right) E_0-1}\cdot \tau \left( H_{n-1}\left( G \right) \right) .
        \end{aligned}
    \end{equation}
    The general expression is
    $$
        \begin{aligned}
            \tau \left( H_n\left( G \right) \right) =5^{n\left( \frac{4}{5}E_0-N_0+1 \right) +\frac{1}{25}\left( 6^n-1 \right) E_0}\cdot 6^{n\left( N_0-\frac{4}{5}E_0-1 \right) +\frac{4}{25}\left( 6^n-1 \right) E_0}\cdot \tau \left( G \right) .
        \end{aligned}
    $$
\end{theorem}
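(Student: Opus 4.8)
The plan is to evaluate $\tau(H_n(G))$ through the product of the nonzero normalized Laplacian eigenvalues. I would start from the standard matrix-tree identity that, for a connected graph on $N$ vertices,
\[
\tau(G)=\frac{\prod_{v}d(v)}{\sum_{v}d(v)}\prod_{i=2}^{N}\lambda_i,
\]
where $0=\lambda_1<\lambda_2\leqslant\cdots\leqslant\lambda_N$ are the eigenvalues of $\mathcal{L}(G)$. Applying this to $H_n(G)$ and to $H_{n-1}(G)$ and dividing, the recursive coefficient of Eq.(\ref{eq30}) factors as a degree-product ratio, the edge ratio $E_{n-1}/E_n$, and the ratio $\Pi_n/\Pi_{n-1}$ of the products of nonzero spectra, where I write $\Pi_m=\prod_{i=2}^{N_m}\lambda_i^{(m)}$.

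The heart of the computation is $\Pi_n$, which I would read off from Theorem \ref{theo1}. The nonzero spectrum of $\mathcal{L}_n$ consists of the inherited triples of part \ref{theo1_1} together with the fixed eigenvalues $\tfrac12,\tfrac32$ (each of multiplicity $N_{n-1}$), $\tfrac{5\pm\sqrt5}{4}$, $\tfrac{3\pm\sqrt5}{4}$, and the eigenvalue $2$ when $G$ is bipartite. For every eigenvalue $\sigma\neq0,2$ of $\mathcal{L}_{n-1}$ the three inherited roots of Eq.(\ref{eq18}) multiply to $\sigma/4$ by Vieta's formula (Eq.(\ref{eq27})), so the inherited block contributes $\prod_{\sigma\neq0,2}(\sigma/4)^{m_{\mathcal{L}_{n-1}}(\sigma)}$, which I can express through $\Pi_{n-1}$ after separating the term $\sigma=2$. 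The fixed values contribute the constants $\tfrac12\cdot\tfrac32=\tfrac34$, $\tfrac{5-\sqrt5}{4}\cdot\tfrac{5+\sqrt5}{4}=\tfrac54$, and $\tfrac{3-\sqrt5}{4}\cdot\tfrac{3+\sqrt5}{4}=\tfrac14$, raised to the multiplicities of parts \ref{theo1_3}--\ref{theo1_6}. Multiplying gives a closed form for $\Pi_n/\Pi_{n-1}$ as a monomial in $2,3,5$, all $\sqrt5$'s cancelling.

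For the remaining factors I would use the degree description from the proof of Theorem \ref{theo1}: each of the $N_{n-1}$ old vertices has its degree doubled and each of the $4E_{n-1}$ new vertices has degree $2$, so $\prod_v d_n(v)=2^{N_{n-1}+4E_{n-1}}\prod_v d_{n-1}(v)$, while $E_n=6E_{n-1}$. Assembling the three ratios and simplifying the exponents of $2$, $3$, and $5$ separately, I expect $\Pi_{n-1}$ and all radical terms to drop out and the recursion to collapse to
\[
\frac{\tau(H_n(G))}{\tau(H_{n-1}(G))}=6^{\,N_{n-1}-1}\,5^{\,E_{n-1}-N_{n-1}+1}.
\]
Substituting $E_{n-1}=6^{n-1}E_0$ and $N_{n-1}=N_0+\tfrac45(6^{n-1}-1)E_0$ turns the two exponents into those of Eq.(\ref{eq30}); telescoping this first-order recursion down to $H_0(G)=G$ then yields the stated closed form.

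The hard part will be the bookkeeping inside $\Pi_n$: one must isolate the single zero eigenvalue and correctly account for the eigenvalue $2$, which belongs to the spectrum exactly when $G$ is bipartite. The delicate point is to confirm that the differing multiplicities in parts \ref{theo1_4}--\ref{theo1_6} nevertheless yield the \emph{same} ratio $\Pi_n/\Pi_{n-1}$ in the bipartite and non-bipartite cases. This reduces to an exact cancellation: in the bipartite case the extra $\tfrac14$ from the larger multiplicity of $\tfrac{3\pm\sqrt5}{4}$ and the factor $\tfrac12$ from deleting $\sigma=2$ out of $\Pi_{n-1}$ are offset by the standalone eigenvalue $2$ and by the unit shift in the exponent of $4$ caused by summing over $\sigma\neq0,2$ rather than $\sigma\neq0$. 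Verifying this cancellation is precisely what makes Eq.(\ref{eq30}) hold uniformly, without reference to whether $G$ is bipartite.
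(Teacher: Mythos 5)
Your proposal is correct and takes essentially the same route as the paper: the paper likewise applies $\tau\left( H_n(G) \right)=\frac{1}{2E_n}\prod_{i=1}^{N_n}d_i^{(n)}\prod_{i=2}^{N_n}\lambda_i^{(n)}$ to consecutive levels (your $\sum_v d(v)=2E$ is the same normalization), evaluates $\Pi_n/\Pi_{n-1}$ from Theorem \ref{theo1} using Vieta's product $f_1(\sigma)f_2(\sigma)f_3(\sigma)=\sigma/4$ together with the constants $\frac{1}{2}\cdot\frac{3}{2}=\frac{3}{4}$, $\frac{5-\sqrt{5}}{4}\cdot\frac{5+\sqrt{5}}{4}=\frac{5}{4}$, $\frac{3-\sqrt{5}}{4}\cdot\frac{3+\sqrt{5}}{4}=\frac{1}{4}$, and telescopes the resulting recursion $\tau\left( H_n(G) \right)/\tau\left( H_{n-1}(G) \right)=6^{N_{n-1}-1}5^{E_{n-1}-N_{n-1}+1}$. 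Your explicit bipartite/non-bipartite cancellation is exactly the paper's remark that the triple at $\sigma=2$ still has product $\sigma/4$ (note the paper's values $f_{1,2}(2)=\frac{3\pm\sqrt{5}}{2}$ contain a typo, the roots being $2,\frac{3\pm\sqrt{5}}{4}$), so the formula holds uniformly.
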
 

\begin{proof}\rm
    Denote the normalized Laplacian spectrum of $H_{n}(G)$ by $0 = \lambda^{(n)}_1 < \lambda^{(n)}_2 \leqslant \cdots \leqslant \lambda^{(n)}_{N_{n}}\leqslant 2$.  According to  \cite{Spectralgraphtheory},  $
        \tau \left( H_{n}(G) \right) =\frac{1}{2E_n}\prod_{i=1}^{N_n}{d^{(n)}_i}\prod_{i=2}^{N_n}{\lambda^{(n)}_i}
    $, where $d^{(n)}_i$ is the degree of vertex $i$ in $H_{n}(G)$.
    Since $  f_1\left( 2 \right) =\frac{3+\sqrt{5}}{2},f_2\left( 2 \right) =\frac{3-\sqrt{5}}{2},f_3\left( 2 \right) =2$, Whether $G$ is bipartite, we have
    \begin{equation}\label{eq32}
        \frac{\tau \left( H_n\left( G \right) \right)}{\tau \left( H_{n-1}\left( G \right) \right)}=\frac{2^{N_n}}{6}\cdot \frac{\prod_{i=2}^{N_n}{\lambda _{i}^{\left( n \right)}}}{\prod_{i=2}^{N_{n-1}}{\lambda _{i}^{\left( n-1 \right)}}}.
    \end{equation}
    And we can get by Theorem \ref{theo1},
    \begin{equation}\label{eq33}
        \begin{aligned}
            \prod_{i=2}^{N_n}{\lambda _{i}^{\left( n \right)}}
             & =\left( \frac{3}{4} \right) ^{N_{n-1}}\cdot \left( \frac{5}{4} \right) ^{E_{n-1}-N_{n-1}+1}\cdot \left( \frac{1}{4} \right) ^{E_{n-1}-N_{n-1}}\cdot \prod_{i=2}^{N_{n-1}}{\frac{\lambda _{i}^{\left( n-1 \right)}}{4}} \\
             & =\frac{3^{N_{n-1}}\cdot 5^{E_{n-1}-N_{n-1}+1}}{2^{4E_{n-1}}}\cdot \prod_{i=2}^{N_{n-1}}{\lambda _{i}^{\left( n-1 \right)}}.
        \end{aligned}
    \end{equation}
    Therefore, we get Eq.(\ref{eq30}) by combining Eqs.(\ref{eq32}) and (\ref{eq33}).

    Then we can also get
    $$
        \begin{aligned}
            \tau \left( H_n\left( G \right) \right) & =5^{\sum_{i=0}^{n-1}{\left[ \left( \frac{1}{5}\cdot 6^i+\frac{4}{5} \right) E_0-N_0+1 \right]}}\cdot 6^{\sum_{i=0}^{n-1}{\left[ N_0+\frac{4}{5}\left( 6^i-1 \right) E_0-1 \right]}}\cdot \tau \left( G \right) \\
                                                    & =5^{n\left( \frac{4}{5}E_0-N_0+1 \right) +\frac{1}{25}\left( 6^n-1 \right) E_0}\cdot 6^{n\left( N_0-\frac{4}{5}E_0-1 \right) +\frac{4}{25}\left( 6^n-1 \right) E_0}\cdot \tau \left( G \right) .
        \end{aligned}
    $$
\end{proof}

\subsection{The Kemeny's constant,  multiplicative degree-Kirchhoff index and spanning trees of \texorpdfstring{$H^k_{n}(G)$}{}}
Let $f_1(\sigma), f_2(\sigma), f_3(\sigma), f_4(\sigma)$, and $f_5(\sigma)$ be the three roots of Eq.(\ref{eq19}). By Vieta's Theorem, we obtain
\begin{equation}\label{eq34}
    \sum_{i=1}^5{\frac{1}{f_i\left( \sigma \right)}}=\frac{25k+5}{k+5}\cdot \frac{1}{\sigma}+\frac{40}{k+5},\quad \prod_{i=1}^5{f_i\left( \sigma \right)}=\frac{k+5}{16k+16}\cdot \sigma .
\end{equation}

Following are five multisets defined by a multiset $M$,
$$
    f_i\left( M \right) =\bigcup_{\chi \in M}{\left\{ f_i\left( \chi \right) \right\}},\ i=1,2,3,4,5.
$$

\begin{theorem} \label{theo45}
    The Kemeny's constant $K\left( H_{n}^{k}\left( G \right) \right)$ of $H^k_n(G)$ can be obtained by iteration of the following formula
    $$
        \begin{aligned}
            K\left( H_{n}^{k}\left( G \right) \right) & =\frac{25k+5}{k+5}K\left( H_{n-1}^{k}\left( G \right) \right) +\left[ \frac{8k\left( 5k+21 \right)}{5\left( k+5 \right)}\cdot \left( 5k+1 \right) ^{n-1}+\frac{32k}{5\left( k+5 \right)} \right] E_{0}^{k} \\
                                                      & \quad-\frac{8k}{k+5}N_{0}^{k}+\frac{4\left( 5k^2-23k \right)}{\left( k+5 \right) \left( 5k+1 \right)}.
        \end{aligned}
    $$
    In general,
    $$
        \begin{aligned}
            K\left( H_{n}^{k}\left( G \right) \right) & =\left( \frac{25k+5}{k+5} \right) ^nK\left( G \right) +\left[ \frac{8k\left( 5k+21 \right) \eta}{5\left( k+5 \right)}\cdot \left( 5k+1 \right) ^{n-1}+\frac{32k\mu}{5\left( k+5 \right)} \right] E_{0}^{k} \\
                                                      & \quad-\frac{8k\mu}{k+5}N_{0}^{k}+\frac{4\left( 5k^2-23k \right) \mu}{\left( k+5 \right) \left( 5k+1 \right)},
        \end{aligned}
    $$
    %where $\mu$ and $\eta$ are defined by Eq.(\ref{eq47}).
    where
    \begin{equation}\label{eq47}
        \begin{aligned}
            \mu =\frac{k+5}{24k}\left\{ \left[ \frac{5\left( 5k+1 \right)}{k+5} \right] ^n-1 \right\} ,
            \eta =-\frac{k+5}{k}\left\{ \left[ \frac{5}{k+5} \right] ^n-1 \right\}.
        \end{aligned}
    \end{equation}
\end{theorem}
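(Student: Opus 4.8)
The plan is to compute $K(H_n^k(G))=\sum_{i=2}^{N_n^k}1/\lambda_i^{(n)}$, the sum of reciprocals of the nonzero eigenvalues of $\mathcal{L}_n^k$ (see \cite{Algebraic_aspects_NL}), by grouping these eigenvalues according to Theorem~\ref{theo2} and summing reciprocals with the Vieta relations of Eq.~(\ref{eq34}). First I would record the reciprocal sums of the quadratic factors that produce the ``fixed'' eigenvalues: $\frac{5\pm\sqrt{5}}{4}$ are the roots of $4\lambda^2-10\lambda+5$ with $\frac{4}{5+\sqrt{5}}+\frac{4}{5-\sqrt{5}}=2$, the values $\frac{3\pm\sqrt{5}}{4}$ are the roots of $4\lambda^2-6\lambda+1$ with $\frac{4}{3+\sqrt{5}}+\frac{4}{3-\sqrt{5}}=6$, and the part-\ref{theo2_2} pair $\frac{5k+3\pm\sqrt{5k^2+6k+5}}{4(k+1)}$ are the roots of $(4k+4)\lambda^2-(10k+6)\lambda+(5k+1)$, whose reciprocals sum to $\frac{10k+6}{5k+1}$. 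The structural fact I would extract from Eq.~(\ref{eq19}) is that $\frac{5\pm\sqrt{5}}{4}$ (resp. $\frac{3\pm\sqrt{5}}{4}$) occur among the five roots exactly when $\sigma=0$ (resp. $\sigma=2$), while the remaining roots at $\sigma=0,2$ are precisely the part-\ref{theo2_2} and part-\ref{theo2_3} eigenvalues together with $0$ and $2$.

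The main term comes from summing over \emph{all} nonzero eigenvalues $\sigma$ of $\mathcal{L}_{n-1}^k$. For each such $\sigma$ (with multiplicity $m(\sigma)$), Theorem~\ref{theo2}\ref{theo2_1} and Eq.~(\ref{eq34}) give $\sum_{i=1}^5 1/f_i(\sigma)=\frac{25k+5}{k+5}\cdot\frac1\sigma+\frac{40}{k+5}$, and since $\prod_i f_i(\sigma)=\frac{k+5}{16k+16}\sigma\ne0$ no root is zero. Weighting by $m(\sigma)$ and using $\sum_{\sigma\ne0}m(\sigma)/\sigma=K(H_{n-1}^k(G))$ and $\sum_{\sigma\ne0}m(\sigma)=N_{n-1}^k-1$ yields $\frac{25k+5}{k+5}K(H_{n-1}^k(G))+\frac{40}{k+5}(N_{n-1}^k-1)$. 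This device records every part-\ref{theo2_1} eigenvalue and, via $\sigma=2$ when $G$ is bipartite, every part-\ref{theo2_3} eigenvalue; but it counts each of $\frac{3\pm\sqrt{5}}{4}$ at most once, and it omits the part-\ref{theo2_2} and part-\ref{theo2_4} eigenvalues entirely, since $\sigma=0$ is excluded.

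Next I would add the missing reciprocals using the multiplicities of Theorem~\ref{theo2}. The part-\ref{theo2_2} pair contributes $\frac{10k+6}{5k+1}$; part-\ref{theo2_4} contributes $2(kE_{n-1}^k-N_{n-1}^k+1)$; and the deficit in the multiplicity of $\frac{3\pm\sqrt{5}}{4}$ equals $kE_{n-1}^k-N_{n-1}^k$ in both the bipartite case (true multiplicity $kE_{n-1}^k-N_{n-1}^k+1$ minus the single copy already counted at $\sigma=2$) and the non-bipartite case (true multiplicity $kE_{n-1}^k-N_{n-1}^k$ minus zero), so it contributes $6(kE_{n-1}^k-N_{n-1}^k)$ uniformly. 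Collecting terms gives the recurrence with $E_{n-1}^k$-coefficient $8k$, $N_{n-1}^k$-coefficient $-\frac{8k}{k+5}$, and constant $-\frac{40}{k+5}+\frac{10k+6}{5k+1}+2=\frac{4(5k^2-23k)}{(k+5)(5k+1)}$; substituting $E_{n-1}^k=(5k+1)^{n-1}E_0^k$ and $N_{n-1}^k=N_0^k+\frac45\big((5k+1)^{n-1}-1\big)E_0^k$ and regrouping produces the stated iterative formula. As a sanity check I would verify that all multiplicities sum to $N_n^k=N_{n-1}^k+4kE_{n-1}^k$ in both parity cases, confirming that the uniform correction is legitimate.

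Finally, the closed form follows by solving the linear recurrence $K_n=rK_{n-1}+A\rho^{n-1}+B$ with $r=\frac{25k+5}{k+5}$, $\rho=5k+1$, $A=\frac{8k(5k+21)}{5(k+5)}E_0^k$, and $B$ the remaining constant-plus-$N_0^k$ terms. Unrolling yields $K_n=r^nK(G)+A\sum_{i=0}^{n-1}r^{n-1-i}\rho^i+B\sum_{i=0}^{n-1}r^{n-1-i}$; since $r/\rho=\frac{5}{k+5}$ and $r-1=\frac{24k}{k+5}$, the two geometric sums evaluate to $\rho^{n-1}\eta$ and $\mu$ with $\mu,\eta$ exactly as in Eq.~(\ref{eq47}), giving the general expression. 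I expect the principal difficulty to be the multiplicity bookkeeping of the third paragraph --- pinning down which eigenvalues the ``sum over all $\sigma$'' double counts or omits, and confirming that the $\frac{3\pm\sqrt{5}}{4}$ correction is parity-independent --- rather than the routine Vieta evaluations or the recurrence solution.
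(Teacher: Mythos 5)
Your proposal is correct and takes essentially the same route as the paper: the paper's proof also sums reciprocals of the nonzero eigenvalues grouped via Theorem~\ref{theo2} using the Vieta relations~(\ref{eq34}), arriving at exactly your intermediate identity $K\left( H_{n}^{k}\left( G \right) \right) =\sum_{i=2}^{N_{n-1}^{k}}\left( \frac{25k+5}{k+5}\cdot \frac{1}{\lambda^{(n-1)} _i}+\frac{40}{k+5} \right)+\frac{2\left( 5k+3 \right)}{5k+1}+8kE_{n-1}^{k}-8N_{n-1}^{k}+2$, and then simplifies and unrolls the linear recurrence. Your third-paragraph multiplicity bookkeeping (the parity-independent correction $6(kE_{n-1}^k-N_{n-1}^k)$ for $\frac{3\pm\sqrt{5}}{4}$) simply makes explicit what the paper compresses into ``similar to the proof of Theorem~\ref{theo43}'' and its ``whether $G$ is bipartite'' remark.
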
 
\begin{proof}\rm
    Similar to the proof of Theorem \ref{theo43}, we have
    $$
        \begin{aligned}
            K\left( H_{n}^{k}\left( G \right) \right) & =\sum_{i=2}^{N_{n-1}^{k}}{\left( \frac{25k+5}{k+5}\cdot \frac{1}{\lambda^{(n-1)} _i}+\frac{40}{k+5} \right)}+\frac{2\left( 5k+3 \right)}{5k+1}+8kE_{n-1}^{k}
            -8N_{n-1}^{k}+2.
        \end{aligned}
    $$
    % Similar to the proof of Eq.(\ref{eq29}), we can get
    % $$
    %     \begin{aligned}
    %         K\left( H_{n}^{k}\left( G \right) \right) 
    %         &=\left( \frac{25k+5}{k+5} \right) ^nK\left( G \right) +\left[ \frac{8k\left( 5k+21 \right) \eta}{5\left( k+5 \right)}\cdot \left( 5k+1 \right) ^{n-1}+\frac{32k\mu}{5\left( k+5 \right)} \right] E_{0}^{k}\\
    %         &\quad-\frac{8k\mu}{k+5}N_{0}^{k}+\frac{4\left( 5k^2-23k \right) \mu}{\left( k+5 \right) \left( 5k+1 \right)}.
    %     \end{aligned}    
    % $$
    % where $\mu$ and $\eta$ are defined by Eq.(\ref{eq47}).

    % This completes the proof.
    The results can be obtained by simplifying the above equation. 
    %This result is a direct consequence of Eq.(\ref{eq28}) and Theorem \ref{theo46}. 
\end{proof}

\begin{theorem} \label{theo46}
    The relationship between the multiplicative degree-Kirchhoff indices of $H^k_n(G)$ and $H^k_{n-1}(G)$ is
    $$
        \begin{aligned}
            Kf^{'}\left( H_{n}^{k}(G) \right) & =\frac{5\left( 5k+1 \right) ^2}{k+5}Kf^{'}\left( H_{n-1}^{k}(G) \right) -\frac{16k\left( 5k+1 \right) ^n}{k+5}E_{0}^{k}N_{0}^{k}+\left[\frac{64}{5\left( k+5 \right)}\cdot \left( 5k+1 \right) ^n
            \right.                                                                                                                                                                                                                               \\
                                              & \left.\quad
                +\frac{16k\left( 5k+21 \right)}{5\left( k+5 \right)}\cdot \left( 5k+1 \right) ^{2n-1} \right] \left( E_{0}^{k} \right) ^2+\frac{8\left( 5k^2-23k \right)}{\left( k+5 \right)}\cdot \left( 5k+1 \right) ^{n-1}E_{0}^{k}.
        \end{aligned}
    $$
    Thus, the general expression for  $Kf^{'}\left( H^k_n\left( G \right) \right)$ is
    \begin{equation}\label{eq46}
        \begin{aligned}
            Kf^{'}\left( H_{n}^{k}\left( G \right) \right) & =\left[ \frac{5\left( 5k+1 \right) ^2}{k+5} \right] ^nKf^{'}\left( G \right) -\frac{16k\left( 5k+1 \right) ^n\mu}{k+5}E_{0}^{k}N_{0}^{k}+\left[ \frac{64k\mu}{5\left( k+5 \right)}\cdot \left( 5k+1 \right) ^n
            \right.                                                                                                                                                                                                                                                         \\
                                                           & \left.\quad
                +\frac{16k\left( 5k+21 \right) \eta}{5\left( k+5 \right)}\cdot \left( 5k+1 \right) ^{2n-1} \right] \left( E_{0}^{k} \right) ^2+\frac{8\left( 5k^2-23k \right) \mu}{\left( k+5 \right)}\cdot \left( 5k+1 \right) ^{n-1}E_{0}^{k},
        \end{aligned}
    \end{equation}
    where $\mu$ and $\eta$ are defined by Eq.(\ref{eq47}).
\end{theorem}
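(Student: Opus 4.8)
The plan is to mirror the proof of Theorem~\ref{theo42}, exploiting the single scaling relation that links the multiplicative degree-Kirchhoff index to the Kemeny's constant of any connected graph. By \cite{Algebraic_aspects_NL} and \cite{1}, for every $n$ we have
$$
    Kf'\left( H_{n}^{k}(G) \right) = 2E_{n}^{k}\cdot K\left( H_{n}^{k}(G) \right),
$$
so the whole statement should follow from Theorem~\ref{theo45} simply by multiplying through by $2E_{n}^{k}$, while keeping track of the combinatorial factors $E_{n}^{k}=(5k+1)E_{n-1}^{k}=(5k+1)^{n}E_{0}^{k}$.

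First I would establish the iteration formula. Taking the recursion of Theorem~\ref{theo45} and multiplying both sides by $2E_{n}^{k}=2(5k+1)E_{n-1}^{k}$, the leading term becomes
$$
    2(5k+1)E_{n-1}^{k}\cdot \frac{25k+5}{k+5}K\left( H_{n-1}^{k}(G) \right)
    =\frac{5(5k+1)^{2}}{k+5}\cdot 2E_{n-1}^{k}K\left( H_{n-1}^{k}(G) \right)
    =\frac{5(5k+1)^{2}}{k+5}Kf'\left( H_{n-1}^{k}(G) \right),
$$
where I used $\tfrac{25k+5}{k+5}=\tfrac{5(5k+1)}{k+5}$. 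Each of the remaining additive terms in Theorem~\ref{theo45} carries a known power of $(5k+1)$ together with $E_{0}^{k}$ or $N_{0}^{k}$, and all of them get multiplied by the same factor $2(5k+1)^{n}E_{0}^{k}$; collecting these reproduces the $-\tfrac{16k(5k+1)^{n}}{k+5}E_{0}^{k}N_{0}^{k}$ term, the bracketed $\bigl(E_{0}^{k}\bigr)^{2}$ contribution, and the trailing $E_{0}^{k}$ term of the iteration.

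Next I would obtain the closed form, Eq.~\eqref{eq46}, by the same device applied to the general expression of Theorem~\ref{theo45}. Multiplying that expression by $2E_{n}^{k}=2(5k+1)^{n}E_{0}^{k}$ turns the head $\bigl(\tfrac{25k+5}{k+5}\bigr)^{n}K(G)$ into $\bigl[\tfrac{5(5k+1)^{2}}{k+5}\bigr]^{n}Kf'(G)$, since
$$
    2(5k+1)^{n}E_{0}^{k}\cdot \Bigl(\tfrac{25k+5}{k+5}\Bigr)^{n}
    =\Bigl[\tfrac{5(5k+1)^{2}}{k+5}\Bigr]^{n}\cdot 2E_{0}^{k},
$$
while the auxiliary quantities $\mu,\eta$ of Eq.~\eqref{eq47} propagate unchanged into the remaining terms. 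The main obstacle is purely one of bookkeeping: matching every power of $(5k+1)$ and every numerical coefficient after the multiplication. The cleanest route is therefore to verify the iteration formula first and then solve the resulting linear recurrence directly, rather than expanding the general Kemeny formula term by term.
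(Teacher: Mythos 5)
Your proposal is exactly the paper's proof: the paper deduces Theorem~\ref{theo46} in one line from Theorem~\ref{theo45} via the identity $Kf^{'}\left( H_{n}^{k}(G) \right) =2E_{n}^{k}\cdot K\left( H_{n}^{k}(G) \right)$, with precisely the bookkeeping $E_{n}^{k}=(5k+1)E_{n-1}^{k}=(5k+1)^{n}E_{0}^{k}$ that you carry out. One side note: your multiplication yields $\frac{64k}{5(k+5)}(5k+1)^{n}$ inside the bracket of the iteration formula, which indicates the stated coefficient $\frac{64}{5(k+5)}$ there is missing a factor $k$ (a typo in the theorem statement, consistent with the $\frac{64k\mu}{5(k+5)}$ term in the closed form), so your computation is actually the corrected version.
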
 

\begin{proof}\rm
    This result can be obtained from Theorem \ref{theo45} since $Kf^{'}\left( H^k_n(G) \right) =2E_n\cdot K\left( H^k_n(G) \right)$. 
\end{proof}

\begin{theorem} \label{theo48}
    The number of spanning trees of $H^k_n\left( G \right)$ is
    \begin{equation}
        \begin{aligned}\label{eq49}
            \tau \left( H_{n}^{k}\left( G \right) \right) =\left( k+5 \right) ^{N_{0}^{k}+\frac{4}{5}\left( \left( 5k+1 \right) ^{n-1}-1 \right) E_{0}^{k}-1}\cdot 5^{\left[ \frac{5k-4}{5}\left( 5k+1 \right) ^{n-1}+\frac{4}{5} \right] E_{0}^{k}-N_{0}^{k}+1}\cdot \tau \left( H_{n-1}^{k}\left( G \right) \right).
        \end{aligned}
    \end{equation}
    The general expression is
    $$
        \begin{aligned}
            \tau \left( H_{n}^{k}\left( G \right) \right) =\left( k+5 \right) ^{n\left( N_{0}^{k}-\frac{4}{5}E_{0}^{k}-1 \right) +\frac{4}{5}\xi E_{0}^{k}}\cdot 5^{n\left( \frac{4}{5}E_{0}^{k}-N_{0}^{k}+1 \right) +\frac{5k-4}{5}\xi E_{0}^{k}}\cdot \tau \left( G \right),
        \end{aligned}
    $$
    where
    \begin{equation}\label{eq48}
        \xi =\sum_{i=0}^{n-1}{\left( 5k+1 \right) ^i}=\frac{\left( 5k+1 \right) ^n-1}{5k}.
    \end{equation}
\end{theorem}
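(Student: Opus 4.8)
The plan is to mirror the argument of Theorem \ref{404} for the $k=1$ case, replacing the cubic Vieta relations there by the quintic ones of Eq.(\ref{eq34}) and the spectral decomposition of Theorem \ref{theo1} by that of Theorem \ref{theo2}. By the Matrix-Tree theorem in normalized-Laplacian form \cite{Spectralgraphtheory}, $\tau(H^k_n(G))=\frac{1}{2E^k_n}\prod_{i=1}^{N^k_n}d^{(n)}_i\prod_{i=2}^{N^k_n}\lambda^{(n)}_i$, so it suffices to evaluate the one-step ratio $\tau(H^k_n(G))/\tau(H^k_{n-1}(G))$ and then telescope. I would split this ratio into three factors: the edge factor $E^k_{n-1}/E^k_n=\frac{1}{5k+1}$; the degree factor, which follows from $d_n(v)=(k+1)d_{n-1}(v)$ on the $N^k_{n-1}$ old vertices and $d_n(v)=2$ on the $4kE^k_{n-1}$ new ones, giving $\prod_i d^{(n)}_i/\prod_i d^{(n-1)}_i=(k+1)^{N^k_{n-1}}2^{4kE^k_{n-1}}$; and the eigenvalue factor $\prod_{i\geq2}\lambda^{(n)}_i/\prod_{i\geq2}\lambda^{(n-1)}_i$.

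The core of the work is the eigenvalue factor, assembled from Theorem \ref{theo2}. For every eigenvalue $\sigma\neq0,2$ of $\mathcal{L}^k_{n-1}$, the five roots of Eq.(\ref{eq19}) contribute $\prod_{i=1}^5 f_i(\sigma)=\frac{k+5}{16(k+1)}\sigma$ by Eq.(\ref{eq34}); collecting these over all such $\sigma$ produces a power of $\frac{k+5}{16(k+1)}$ times $\prod_\sigma\sigma$, which I would rewrite in terms of $\prod_{i\geq2}\lambda^{(n-1)}_i$. The remaining eigenvalues are handled through closed products of conjugate pairs: $\frac{5-\sqrt5}{4}\cdot\frac{5+\sqrt5}{4}=\frac54$ and $\frac{3-\sqrt5}{4}\cdot\frac{3+\sqrt5}{4}=\frac14$ from parts (iv)--(vi), together with the $k$-dependent pairs from parts (ii)--(iii), for which one computes $\frac{5k+3-\sqrt{5k^2+6k+5}}{4(k+1)}\cdot\frac{5k+3+\sqrt{5k^2+6k+5}}{4(k+1)}=\frac{5k+1}{4(k+1)}$ (using $5k^2+6k+1=(5k+1)(k+1)$) and $\frac{3k+5-\sqrt{5k^2+6k+5}}{4(k+1)}\cdot\frac{3k+5+\sqrt{5k^2+6k+5}}{4(k+1)}=\frac{k+5}{4(k+1)}$ (using $k^2+6k+5=(k+1)(k+5)$). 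The bipartite and non-bipartite cases must be bookkept separately: in the bipartite case the extra eigenvalue $2$ of $\mathcal{L}^k_{n-1}$ feeds the value $2$ and the pair $\frac{3k+5\pm\sqrt{5k^2+6k+5}}{4(k+1)}$ and raises the multiplicity of $\frac{3\pm\sqrt5}{4}$ by one; but, exactly as in Theorem \ref{404}, I expect both cases to collapse to the same final ratio.

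Combining the three factors, I would then verify the decisive algebraic cancellation. The factor $5k+1$ arising from the $\sigma=0$ pair cancels the edge factor $\frac{1}{5k+1}$; every power of $(k+1)$ cancels, its exponent summing to $N^k_{n-1}-(N^k_{n-1}-2)-2=0$; and, most delicately, every power of $2$ cancels, since $4kE^k_{n-1}-4(N^k_{n-1}-2)-4-4\bigl(kE^k_{n-1}-N^k_{n-1}+1\bigr)=0$. What survives is the clean recursion $\tau(H^k_n(G))/\tau(H^k_{n-1}(G))=(k+5)^{N^k_{n-1}-1}\,5^{\,kE^k_{n-1}-N^k_{n-1}+1}$. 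Substituting $N^k_{n-1}=N^k_0+\frac45\bigl((5k+1)^{n-1}-1\bigr)E^k_0$ and $E^k_{n-1}=(5k+1)^{n-1}E^k_0$ into the two exponents then reproduces Eq.(\ref{eq49}).

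The final step is purely computational: telescoping $\tau(H^k_n(G))=\prod_{j=1}^{n}(k+5)^{N^k_{j-1}-1}\,5^{\,kE^k_{j-1}-N^k_{j-1}+1}\cdot\tau(G)$ and evaluating the geometric sum $\sum_{j=1}^{n}(5k+1)^{j-1}=\xi$ of Eq.(\ref{eq48}) in each exponent yields the stated general expression. The main obstacle is the eigenvalue bookkeeping of the second paragraph: one must assign the split-off $\sigma=0$ and $\sigma=2$ contributions to the correct parts of Theorem \ref{theo2}, confirm that the multiplicities of $\frac{5\pm\sqrt5}{4}$ and $\frac{3\pm\sqrt5}{4}$ are not double-counted, and check that the exponents of $5k+1$, $(k+1)$ and $2$ cancel identically so that the bipartite and non-bipartite computations agree. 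Once those cancellations are confirmed, the degree and edge factors and the subsequent telescoping are routine.
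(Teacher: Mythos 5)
Your proposal is correct and takes essentially the same route as the paper: your three-factor decomposition of the one-step ratio is exactly the paper's Eq.(\ref{eq50}), your eigenvalue bookkeeping via the quintic Vieta product $\prod_{i=1}^5 f_i(\sigma)=\frac{k+5}{16(k+1)}\sigma$ together with the conjugate-pair products $\frac{5}{4}$, $\frac{1}{4}$, $\frac{5k+1}{4(k+1)}$, $\frac{k+5}{4(k+1)}$ reproduces Eq.(\ref{eq51}), and the recursion $(k+5)^{N^k_{n-1}-1}5^{kE^k_{n-1}-N^k_{n-1}+1}$ followed by telescoping is precisely how the paper obtains Eqs.(\ref{eq49}) and (\ref{eq48}). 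The cancellations you flag as the remaining obstacle do check out exactly as you computed them, in both the bipartite count ($N^k_{n-1}-(N^k_{n-1}-2)-2=0$) and the non-bipartite count ($N^k_{n-1}-(N^k_{n-1}-1)-1=0$), so the two cases collapse to the same ratio as you anticipated.
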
 

\begin{proof}\rm
    % Let the normalized Laplacian eigenvalues of $H^k_n(G)$ be $0<\lambda _{1}^{\left( n \right)}\leqslant \lambda _{2}^{\left( n \right)}\leqslant \cdots \leqslant \lambda _{N^k_n}^{\left( n \right)}$. Since
    % $$
    % f_{1,2}\left( 2 \right) =\frac{3\pm \sqrt{5}}{2},f_{3,4}\left( 2 \right) \,\,=\frac{3k+5\pm \sqrt{5k^2+6k+5}}{4\left( k+1 \right)},f_5\left( 2 \right) =2,
    % $$  
    % Whether or not $G$ is bipartite, by Lemma\ref{lemma24}\ref{lem24_3} and the definition of $H^k_n(G)$  we have
    Similar to the proof of Theorem \ref{404}, we have
    \begin{equation}\label{eq50}
        \frac{\tau \left( H_{n}^{k}\left( G \right) \right)}{\tau \left( H_{n-1}^{k}\left( G \right) \right)}=\frac{\left( k+1 \right) ^{N_{n-1}^{k}}\cdot 2^{4kE_{n-1}^{k}}}{5k+1}\cdot \frac{\prod_{i=2}^{N^k_n}{\lambda _{i}^{\left( n \right)}}}{\prod_{i=2}^{N^k_{n-1}}{\lambda _{i}^{\left( n-1 \right)}}}.
    \end{equation}
    And we can get by Theorem \ref{theo2},
    \begin{equation}\label{eq51}
        \begin{aligned}
            \prod_{i=2}^{N^k_n}{\lambda _{i}^{\left( n \right)}}
            % &=\frac{5k+1}{4\left( k+1 \right)}\cdot \left( \frac{5}{4} \right) ^{kE_{n-1}^{k}-N_{n-1}^{k}+1}\cdot \left( \frac{1}{4} \right) ^{kE_{n-1}^{k}-N_{n-1}^{k}}\cdot \prod_{i=2}^{N^k_{n-1}}{\frac{k+5}{16k+16}\lambda _{i}^{\left( n-1 \right)}}\\
             & =\frac{\left( 5k+1 \right) \cdot \left( k+5 \right) ^{N_{n-1}^{k}-1}\cdot 5^{kE_{n-1}^{k}-N_{n-1}^{k}+1}}{\left( k+1 \right) ^{N_{n-1}^{k}}\cdot 2^{4kE_{n-1}^{k}}}\cdot \prod_{i=2}^{N_{n-1}^{k}}{\lambda _{i}^{\left( n-1 \right)}}.
        \end{aligned}
    \end{equation}
    Therefore, combining Eqs.(\ref{eq50}) and (\ref{eq51}), we can get Eq.(\ref{eq49}).

    Then we can also get
    $$
        \begin{aligned}
            \tau \left( H_{n}^{k}\left( G \right) \right) & =\left( k+5 \right) ^{\sum_{i=0}^{n-1}{\left\{ N_{0}^{k}+\frac{4}{5}\left( \left( 5k+1 \right) ^i-1 \right) E_{0}^{k}-1 \right\}}}\cdot 5^{\sum_{i=0}^{n-1}{\left\{ \left[ \frac{5k-4}{5}\left( 5k+1 \right) ^i+\frac{4}{5} \right] E_{0}^{k}-N_{0}^{k}+1 \right\}}}\cdot \tau \left( G \right) \\
            % &
            % =\left( k+5 \right) ^{n\left( N_{0}^{k}-\frac{4}{5}E_{0}^{k}-1 \right) +\frac{4}{5}\xi E_{0}^{k}}\cdot 5^{n\left( \frac{4}{5}E_{0}^{k}-N_{0}^{k}+1 \right) +\frac{5k-4}{5}\xi E_{0}^{k}}\cdot \tau \left( G \right) ,
        \end{aligned}
    $$
    The results can be obtained by simplifying the above equation. 
\end{proof}

\subsection{A new discovery and its explanation}
Interestingly, we find that although the Laplacian spectrum of $H_{n}(G)$  and $H^k_{n}(G)$  have different structures, but the formulas to calculate $K(H^k_{n}(G))$, $Kf^{'}(H^k_{n}(G))$ and $\tau \left( H_{n}^{k}\left( G \right) \right)$ (Theorems \ref{theo45}-\ref{theo48}) also hold when $k = 1$. We now give the reasons for this phenomenon.

Denote the roots of Eq.(\ref{eq18}) by $\lambda_1,\lambda_2,\lambda_3$. When $k=1$, Eq.(\ref{eq19}) can be written as
\begin{equation}\label{eq60}
    8\left( \lambda -\frac{1}{2} \right) \left( \lambda -\frac{3}{2} \right) \left[ 4\lambda ^3-\left( 10+2\sigma \right) \lambda ^2+\left( 5+4\sigma \right) \lambda -\sigma \right] =0,
\end{equation}
whose roots are $\lambda_1,\lambda_2,\lambda_3,\frac{1}{2},\frac{3}{2}$.

Because no matter what the value of $\sigma$  is, $\frac{1}{2}$ and $\frac{3}{2}$ are always the roots of Eq.(\ref{eq60}), combined with Theorem \ref{theo2}, we know that the multiplicity of $\frac{1}{2}$ and $\frac{3}{2}$ is $N_{n-1}$, so the normalized Laplacian spectrum of $H_n(G)$ and $H^k_n(G),k=1$ are the same from Theorem \ref{theo1} and Theorem \ref{theo2}.

%Therefore, Theorems \ref{theo45}-\ref{theo48} hold for all integers $k \geqslant 1$.

\section{Numerical Experiment}
Denote the $n$-th hexagonal graph  and k-hexagonal graph of a cycle of length 6 by $H_n$ and $H^k_n$. For $0\leqslant n\leqslant 8$, we list the Kemeny's constant of $H_n$ and $H^2_n$ in Table \ref{table2}, and multiplicative degree-Kirchhoff indices of $H_n$ and $H^2_n$ in Table \ref{table1}. Then we list The numbers of spanning trees of $H_n$ and $H^2_n$ for $0\leqslant n\leqslant 2$ in Table  \ref{table3}. It can be seen from Table  \ref{table3} that the numbers of spanning trees grows very fast. For $H^2_n$, it is difficult to calculate when $n > 2$.

\begin{table}[htb]
    \begin{center}
        \caption{The Kemeny's constant of $H_0,\cdots,H_8$ and $H^2_0,\cdots,H^2_8$}
        \label{table2}
        \begin{tabular}{cccccc}
            G       & $K(G)$      & G       & $K(G)$       & G       & $K(G)$        \\
            \hline
            $H_0$   & 0.89        & $H_1$   & 42.44        & $H_2$   & 458.22        \\
            $H_3$   & 3785.11     & $H_4$   & 27907.56     & $H_5$   & 193447.78     \\
            $H_6$   & 1290716.89  & $H_7$   & 8394470.44   & $H_8$   & 53617686.22   \\
            $H^2_0$ & 0.89        & $H^2_1$ & 87.92        & $H^2_2$ & 1622.01       \\
            $H^2_3$ & 23028.76    & $H^2_4$ & 294109.21    & $H^2_5$ & 3555757.33    \\
            $H^2_6$ & 41632025.66 & $H^2_7$ & 477742069.94 & $H^2_8$ & 5410653999.62
        \end{tabular}
    \end{center}
\end{table}

\begin{table}[htb]
    \begin{center}
        \caption{The multiplicative degree-Kirchhoff indices of $H_0,\cdots,H_8$ and $H^2_0,\cdots,H^2_8$}
        \label{table1}
        \begin{tabular}{cccccc}
            G       & $Kf^{'}(G) $    & G       & $Kf^{'}(G)$                 & G       & $Kf^{'}(G) $                 \\
            \hline
            $H_0$   & 10.67           & $H_1$   & 3056                        & $H_2$   & 197952                       \\
            $H_3$   & 9811008         & $H_4$   & 434018304                   & $H_5$   & 18050999040                  \\
            $H_6$   & 722636246016    & $H_7$   & 28198973740032              & $H_8$   & 1080685483941890             \\
            $H^2_0$ & 10.67           & $H^2_1$ & 11605.33                    & $H^2_2$ & 2355164.95                   \\
            $H^2_3$ & 367815398.31    & $H^2_4$ & 51672635965.05              & $H^2_5$ & 6871899281276.09             \\
            $H^2_6$ & 885044076026391 & $H^2_7$ & 11171809693029$\times 10^4$ & $H^2_8$ & 139178608420502$\times 10^5$
        \end{tabular}
    \end{center}
\end{table}

\begin{table}[htb]
    \begin{center}
        \caption{The numbers of spanning trees of $H_0,\cdots,H_2$ and $H^2_0,\cdots,H^2_2$}
        \label{table3}
        \begin{tabular}{cccccc}
            G       & $\tau(G)$ & G       & $\tau(G)$  & G       & $\tau(G)$                         \\
            \hline
            $H_0$   & 6         & $H_1$   & 241943     & $H_2$   & 6.71512031151729$\times 10^{32}$  \\
            $H^2_0$ & 6         & $H^2_1$ & 8426691368 & $H^2_2$ & 8.04003508846179$\times 10^{109}$
        \end{tabular}
    \end{center}
\end{table}

\section{Conclusion}
%  The normalized Laplacian spectra of iterated triangulations of a simple connected graph $G$ are determined in  \cite{tri}, whereas the normalized Laplacian spectra of the quadrilateral graph of $G$ were determined in  \cite{qua}. As well the degree-Kirchhoff indices, Kemeny's constants and number of spanning trees were studied.

% we consider the same problems on the $n$-th hexagonal graph $H_{n}(G)$ and $k$-hexagonal graph $H^k_{n}(G)$, which can be seen as an extension of the previous works mentioned as above. More specifically,
In this paper, we get the normalized Laplacian spectrum of $H_{n}(G)$ (Theorem \ref{theo1}) and $H^k_{n}(G)$ (Theorem \ref{theo2}) for any simple connected graph $G$. In addition, the formulas to calculate Kemeny's constants, the multiplicative degree-Kirchhoff indices, and the numbers of spanning trees of $H_{n}(G)$ and $H^k_{n}(G)$ are obtained. Finally, we give the reason for that
the formulas to the parameters  of $H^k_{n}(G)$, $k \geqslant 2$ are also hold when $k = 1$.

Different from the previous research on normalized Laplacian spectra \cite{tri,qua,k-tri,Pan,Chang}, We derive the normalized Laplacian spectrum of the $n$-th  hexagonal graph $H_{n}(G)$ and $k$-hexagonal graph $H^k_{n}(G)$ through the analysis of the structure of the solution of linear Equations (\ref{pro1_leqs}) and (\ref{pro2_leqs}). This analysis method is more convenient to be extended to $P^{k,t}(G)$, which obtained by substituting each edge of $G$ by a path of length 1 and $k$ paths of length $t$, $t > 5$.

\section*{Acknowledgments}
This work is supported by the Teaching Project of Nanjing Tech University (Project Number: 2019040), the Teaching Project of Jiangsu Higher Education Association (Project Number: 2020JDK023), and the National Natural Science Foundation of China (Grant/Award Number: 1771210).

\section*{Conflict of interest}
The authors declare that they have no known competing financial interests or personal relationships that could have appeared to influence the work reported in this paper.

\bibliographystyle{plain}
\bibliography{ref}

\end{document}